\definecolor{dblue}{rgb}{0,0,.6}
\newcommand{\Q}{\mathbb{Q}}
\newcommand{\C}{\mathbb{C}}
\newcommand{\N}{\mathbb{N}}
\newcommand{\sing}{\operatorname{sing}}
\newcommand{\supp}{\operatorname{supp}}
\newcommand{\sm}{\operatorname{sm}}
\newcommand{\Ne}{\operatorname{N\'e}}
\newcommand{\bbQ}{\mathbb{Q}}
\newcommand{\bbR}{\mathbb{R}}
\newcommand{\bbC}{\mathbb{C}}
\newcommand{\bbH}{\mathbb{H}}
\newcommand{\bbP}{\mathbb{P}}
\renewcommand{\AA}{\mathcal{A}}
\newcommand{\DD}{\mathcal{D}}
\newcommand{\FF}{\mathcal{F}}
\newcommand{\HH}{\mathcal{H}}
\newcommand{\OO}{\mathcal{O}}
\newcommand{\VV}{\mathcal{V}}
\newcommand{\XX}{\mathcal{X}}
\newcommand{\frso}{\mathfrak{so}}
\newcommand{\GL}{\mathrm{GL}}
\newcommand{\SO}{\mathrm{SO}}
\newcommand{\Sp}{\mathrm{Sp}}
\renewcommand{\ge}{\geqslant}
\renewcommand{\le}{\leqslant}
\newcommand{\catK}{\mathsf{(PVHS_{K3})}}
\newcommand{\catAb}{\mathsf{(VHS_{Ab})}}
\newcommand{\catlimK}{\mathsf{(PMHS_{K3})}}
\newcommand{\catllimK}{\mathsf{(PMHS_{K3}^{lim})}}
\newcommand{\catlimAb}{\mathsf{(MHS_{Ab})}}
\newcommand{\KS}{\mathrm{KS}}
\newcommand{\limKS}{\mathrm{KS^{lim}}}
\newcommand{\LimK}{\mathrm{Lim_{K3}}}
\newcommand{\LimAb}{\mathrm{Lim_{Ab}}}
\newcommand{\st}{\enskip |\enskip}
\newcommand{\sdot}{{\raisebox{0.16ex}{$\scriptscriptstyle\bullet$}}}
\newcommand{\emrp}{\mathrm{End}}
\newcommand{\im}{\mathrm{im}}
\newcommand{\ii}{\sqrt{-1}}
\newcommand{\wdg}{\wedge}
\newcommand{\Cl}{\mathcal{C}l}
\newcommand{\Spin}{\mathrm{Spin}}
\newcommand{\hrarr}{\hookrightarrow}
\newcommand{\Spec}{\mathrm{Spec}}
\newcommand{\End}{\mathrm{End}}
\newtheorem{defn}{Definition}[section]
\newtheorem{prop}[defn]{Proposition}
\newtheorem{thm}[defn]{Theorem}
\newtheorem{lem}[defn]{Lemma}
\newtheorem{cor}[defn]{Corollary}
\theoremstyle{remark}
  \newtheorem{rem}[defn]{Remark}
\title{The Kuga--Satake construction under degeneration}
\author{Stefan Schreieder}
\address{Mathematisches Institut, LMU M\"unchen,  Theresienstr.\ 39, 80333 M\"unchen, Germany}
\email{schreieder@math.lmu.de}
\author{Andrey Soldatenkov}
\address{Institut f\"ur Mathematik, Humboldt-Universit\"at zu Berlin, Unter den Linden 6, 10099 Berlin, Germany.}
\email{soldatea@hu-berlin.de}
\date{March 27, 2019}
\subjclass[2010]{primary 14D06, 14D07; secondary 14D05} 
\thanks{The authors are supported by the SFB/TR 45 `Periods, Moduli Spaces and Arithmetic of Algebraic Varieties'
of the DFG (German Research Foundation).}
\begin{document}

\begin{abstract}  
We extend the Kuga--Satake construction to the case of limit mixed Hodge structures of K3 type.
We use this to study the geometry and Hodge theory of degenerations of Kuga--Satake abelian varieties associated to polarized variations of K3 type Hodge structures over the punctured disc.
\end{abstract}

\maketitle

\section{Introduction} 

The Kuga--Satake construction \cite{KS} associates to any polarized rational weight two Hodge structure $V$ of K3 type (i.e.\ with $V^{2,0}\cong \C$) an abelian variety $A$, well-defined up to isogeny, with an embedding of Hodge structures 
$$
\mathrm{ks}: V(1)\hookrightarrow \End (H^1(A,\Q))\subset H^2(A\times A,\Q)(1).
$$ 
Here the rational vector space $H^1(A,\Q)$ is given by the Clifford algebra $\Cl(V,q)$, associated to the polarization $q$ of $V$.
If $V\subset H^2(X,\Q)$ for some smooth projective variety $X$ (e.g.\ a K3 surface or, more generally, a projective hyperk\"ahler manifold), then the above embedding corresponds to a Hodge class on  $X\times A\times A$. 
Even though algebraicity of that class is known only in very few cases, the Kuga--Satake construction is expected to give a close relation between the geometry of $X$ and the associated Kuga--Satake abelian variety $A$.
The  Kuga--Satake construction has been generalized by Voisin \cite{V} and  
by Kurnosov, Verbitsky and the second author \cite{KSV}.

The Kuga-Satake abelian varieties may be seen as analogues of intermediate Jacobians for K3 type Hodge
structures. If we apply this construction to families of K3 surfaces, it is important to
understand its behaviour near the points where the surfaces become singular. For intermediate Jacobians
this is a classical and well-studied subject, see e.g. \cite{Cl}, \cite{Sa}, \cite{Zu}.
This motivates the study of the Kuga--Satake construction under degeneration.
We start from a polarized variation of Hodge structures (VHS) of K3 type over the punctured
disc $\Delta^* = \Delta\setminus\{0\}$; up to a finite base change, we may assume that the monodromy of the underlying local system is unipotent, see \cite[Lemma 4.5]{Sch}.
Geometrically such a VHS comes from a flat projective family over the unit disc
$\pi\colon \mathcal X\to \Delta$, smooth over $\Delta^*$ and with general fibre 
 for instance a projective hyperk\"ahler manifold
or an abelian surface. 
Applying the Kuga--Satake construction to the VHS over $\Delta^*$ we get a polarized variation of weight one
Hodge structures. 
Using a result of Borel \cite{Bor} and the semi-stable reduction theorem \cite{KKMSD}, we obtain (up to a finite base change)
a semi-stable family $\alpha\colon\mathcal A\to \Delta$ of abelian varieties.
If the fibre $\XX_0$ is singular, then we expect $\AA_0$ to be singular as well, and it is natural to wonder
how to describe such singular fibres. When $\XX$ is a family of K3 surfaces, this question is well
understood since the work of Kulikov \cite{Ku}: there are essentially three types of fibres $\XX_0$ that can appear.
The analogous question for the associated Kuga--Satake varieties 
is however much more subtle. 

As a first approximation, one can try to describe the mixed Hodge structure of $\AA_0$.
Via the Clemens--Schmid exact sequence, this is essentially governed by the limit mixed Hodge
structure on a smooth fibre $\AA_{t}$, where $t\in \Delta^*$ is some base point.
The limit mixed Hodge structure $H^1_{lim}(\AA_t,\Q)$ has as underlying vector space $H^1(\AA_t,\Q)$
and is given by two filtrations: the Hodge filtration $F^\sdot_{lim}$ on $H^1(\AA_t,\C)$ and the
weight filtration $W_\sdot$ on $H^1(\AA_t,\Q)$.
The weight filtration is induced by the monodromy operator given by parallel transport along
a loop in $\Delta^*$. The limit Hodge filtration is determined by the germ of the variation of Hodge
structures on $\AA_s$ near zero. This filtration is not canonical and depends on the choice
of the local coordinate on $\Delta^*$; in what follows we fix the local coordinate and do
not consider this dependence.


In this paper we show that the limit mixed Hodge structure $H^1_{lim}(\AA_t,\Q)$ depends only
on the limit mixed Hodge structure attached to the initial VHS of K3 type.
Roughly speaking, this says that the limit of the Hodge structures on the Kuga--Satake
side does not depend on the individual Hodge structures $H^1(\AA_s,\Q)$ for $s\in \Delta^\ast$,
but only on the limit of the Hodge structures on the K3 side.
In fact, we show more generally that the Kuga--Satake construction extends from the case of
pure Hodge structures of K3 type to the case of limit mixed Hodge structures of K3 type
(cf.\ Section \ref{subsubsec:MHS-K3} below) and this construction is compatible
with the geometric situation described above.

All Hodge structures that we consider in this paper are rational and have level $\le 2$.
We consider the following categories, for precise definitions see Section \ref{sec:preliminaries} below: 
\begin{itemize}
\item $\catK$ = category of polarized VHS of K3 type and with unipotent monodromy over $\Delta^*$;
\item $\catAb$ = category of polarizable VHS of weight one and with unipotent monodromy over $\Delta^*$;
\item  $\catlimK$ = category of polarized MHS of K3 type;
\item  $\catlimAb$ = category of polarizable MHS of weight one.
\end{itemize}
The polarized MHS here are in the sense of \cite[Definition 2.26]{CKS},
see also Section \ref{subsubsec:MHS-K3} below.
For us it will be important that polarizations are fixed for only half of the above categories, see Remark \ref{rem:polarizations} below.
The above categories are related by the following diagram of functors:
\begin{align} \label{diag:functors}
\xymatrix{
\catK \ar[d]^{\LimK} \ar[r]^{\KS} &\catAb\ar[d]^{\LimAb}\\
\catlimK  
&\catlimAb}
\end{align}
where $\KS$ denotes the Kuga--Satake functor described above, and $\LimK$ and $\LimAb$ denote the functors that compute the corresponding limit mixed Hodge structures. We will denote by $\catllimK$ the essential image of $\LimK$, i.e.\ the full subcategory 
of $\catlimK$ whose objects are in the image of $\LimK$.

\begin{thm}\label{thm:limit-KS}
There exists a functor $\limKS\colon\catllimK\to\catlimAb$ which makes the diagram (\ref{diag:functors}) commutative; that is,
$$\LimAb\circ\KS = \limKS\circ \LimK.$$ 
Moreover, for any polarized limit mixed Hodge structure $\overline{V} = (V,q,F^\sdot,N)\in \catllimK$ of K3 type, 
there exists an embedding of mixed Hodge structures 
$$
\mathrm{ks}: \overline{V}(1)\hookrightarrow \End(\overline{H}), \ \ \text{where}\ \overline{H}=\limKS(\overline{V}).
$$ 
\end{thm}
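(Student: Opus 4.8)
\emph{Construction of $\limKS$.} The plan is to define $\limKS$ on the nilpotent orbit attached to a polarized limit mixed Hodge structure and then to deduce commutativity of (\ref{diag:functors}) from Schmid's nilpotent orbit theorem together with the fact that the Kuga--Satake construction is built from $(V,q,N)$ by universal operations on the Clifford algebra. Given $\overline V=(V,q,F^\sdot,N)\in\catllimK$, I would attach to it its nilpotent orbit $\theta_{\overline V}$: the variation over $\Delta^*$ whose fibre over $s=e^{2\pi\ii z}$ carries the filtration $\exp(zN)F^\sdot$. Because $\overline V$ is a \emph{polarized} limit mixed Hodge structure of K3 type, the positivity encoded in that notion guarantees that, after shrinking $\Delta$, $\theta_{\overline V}$ is a polarized variation of Hodge structures of K3 type with unipotent monodromy $\exp(N)$, so $\theta_{\overline V}\in\catK$ and $\LimK(\theta_{\overline V})=\overline V$. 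I would then set $\limKS(\overline V):=\LimAb(\KS(\theta_{\overline V}))$. Since $\overline V\mapsto\theta_{\overline V}$ is functorial (a morphism of limit mixed Hodge structures is a linear map respecting $F^\sdot$ and $N$, hence a morphism of the associated nilpotent orbits over $\Delta^*$), composing with $\KS$ and $\LimAb$ produces a functor $\limKS\colon\catllimK\to\catlimAb$; it lands in $\catlimAb$ because $\LimAb$ does, the limit of a polarizable weight one variation being a polarizable weight one mixed Hodge structure.

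\emph{Commutativity of (\ref{diag:functors}).} Let $\mathbb V\in\catK$ with $\overline V=\LimK(\mathbb V)$; I would show $\LimAb(\KS(\mathbb V))=\LimAb(\KS(\theta_{\overline V}))$, i.e.\ that the limit mixed Hodge structure of $\KS(\mathbb V)$ depends only on $\overline V$. The underlying $\Q$-vector space of $\KS(\mathbb V)$ is $\Cl(V,q)$, its monodromy logarithm is $\tilde N=L_\nu$, left multiplication by the image $\nu\in\Cl^2(V,q)$ of $N$ under $\frso(V,q)\cong\exter^2V\hookrightarrow\Cl(V,q)$, which is nilpotent since $\exp(\nu)\in\Spin(V,q)$ is the unipotent lift of $\exp(N)\in\mathrm{SO}(V,q)$; and its polarization form is the Clifford one, depending only on $(V,q)$ and an orientation. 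Thus only the limit Hodge filtration has to be pinned down. Here I would use that on the compact duals of the period domains the Kuga--Satake map is the holomorphic embedding $\iota$ induced by the spin representation $\rho\colon\Spin(V,q)\to\Sp(\Cl(V,q))$; it is $\rho$-equivariant and satisfies $\rho(\exp(zN))=\exp(z\tilde N)$. Writing $\phi$ for the period map of $\mathbb V$ on the universal cover of $\Delta^*$, Schmid's nilpotent orbit theorem says that $\exp(-zN)\phi(z)$, as a function of $s=e^{2\pi\ii z}$, extends holomorphically over $s=0$ with value the limit Hodge filtration $F^\sdot$ of $\overline V$. Since the period map of $\KS(\mathbb V)$ is $\iota\circ\phi$, equivariance gives $\exp(-z\tilde N)(\iota\circ\phi)(z)=\iota(\exp(-zN)\phi(z))$, which therefore extends over $s=0$ with value $\iota(F^\sdot)$; hence the limit Hodge filtration of $\KS(\mathbb V)$ is $\iota(F^\sdot)$, depending only on $\overline V$. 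Together with the preceding remarks this yields $\LimAb\circ\KS=\limKS\circ\LimK$.

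\emph{The embedding $\mathrm{ks}$.} The Kuga--Satake construction supplies an embedding of variations $\mathrm{ks}\colon\theta_{\overline V}(1)\hookrightarrow\End(\KS(\theta_{\overline V}))$, induced by left multiplication $V\subset\Cl(V,q)$. Applying $\LimAb$, and using that the limit mixed Hodge structure functor is compatible with Tate twists, duals, tensor products and internal $\Hom$, I obtain a morphism of mixed Hodge structures $\overline V(1)=\LimAb(\theta_{\overline V}(1))\to\LimAb(\End(\KS\theta_{\overline V}))=\End(\overline H)$, where $\overline H=\limKS(\overline V)$. It is injective because its underlying linear map is the injective one coming from $\mathrm{ks}$, and it respects all of the mixed Hodge structures by functoriality of $\LimAb$; being a morphism of mixed Hodge structures it is strict, so $\overline V(1)$ embeds as a sub-mixed Hodge structure of $\End(\overline H)$, as required.

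\emph{The main obstacle.} The delicate point is the assertion, used in the second paragraph, that the limit mixed Hodge structure of $\KS(\mathbb V)$ depends only on that of $\mathbb V$ --- not obvious a priori, since $\KS(\mathbb V)$ is built from the entire variation rather than from $(V,q,N)$ alone. Making the equivariance argument rigorous requires identifying the Kuga--Satake construction precisely with the $\rho$-equivariant embedding of compact duals and verifying that $\tilde N=L_\nu$ is indeed its monodromy logarithm. A more hands-on route would compare $\KS(\mathbb V)$ with $\KS(\theta_{\overline V})$ directly through Schmid's asymptotic estimates, and there one must check that the exponentially small error in the nilpotent orbit approximation survives the polynomial-in-$\log|s|$ amplification produced by the untwisting operator $\exp(-z\tilde N)$; this estimate, together with the bookkeeping matching the weight filtration of $\End(\overline H)$ with the one pulled back along $\mathrm{ks}$, is where the real work lies.
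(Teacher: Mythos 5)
Your proposal is correct and follows essentially the same route as the paper: the heart of both arguments is the $\Spin(V_\C,q)$-equivariant extension of the Kuga--Satake map to the compact duals of the period domains (your $\iota$, the paper's $\kappa$), which gives $\Psi_{Ab}=\kappa\circ\Psi_{K3}$ and hence that the limit Hodge filtration of $\KS(\VV)$ is $\kappa$ applied to the limit filtration of $\VV$; the paper merely defines $\limKS(\overline V)$ by the intrinsic formula $(\Cl(V,q),\,\kappa([F^2V]),\,\eta(N))$ instead of via the nilpotent orbit $\theta_{\overline V}$, and verifies compatibility of $\mathrm{ks}$ with the Hodge filtration by a direct Clifford-algebra computation in place of your limit/tensor-functor argument (which the paper notes as an admissible alternative). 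The asymptotic estimates you flag as the ``main obstacle'' are not actually needed: your own equivariance identity already yields $\lim\Psi_{Ab}=\kappa(\lim\Psi_{K3})$ by continuity of $\kappa$ on the compact dual, with no error terms to control.
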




Let us emphasize that the functor $\limKS$ is defined only on the essential image of $\LimK$.
We do not claim that it extends in any natural way to the whole category $\catlimK$.

For any polarized limit mixed Hodge structure $\overline{V} = (V,q,F^\sdot,N)\in \catllimK$ of K3 type, the limit mixed Hodge structure $\limKS(\overline{V})=(H,F_{KS}^\sdot,N_{KS})$ of abelian type in the above theorem has as underlying $\Q$-vector space the Clifford algebra $H:=\Cl(V,q)$.
The Hodge filtration on $H$ is determined by the half-dimensional subspace
$$
F^1_{KS}H_\C:=F^2V_\C\cdot H_\C ,
$$
where $F^2V_\C\cdot H$ denotes the right ideal in the Clifford algebra $H_\C$, generated by the one-dimensional subspace $F^2V_\C$, cf.\ Section \ref{sec:Hodgefiltration} below.
This description of the Hodge filtration relies on a simple description of the usual Kuga--Satake construction, which might be of independent interest, see Lemma \ref{lem:KS-alternative} below. 
Finally, the nilpotent operator $N_{KS}$ is zero if $N=0$ and it is given by left multiplication with the element $f_1f_2\in H$, where $f_1,f_2\in V$ form a basis of $\im(N:V\to V)$, which turns out to be two-dimensional whenever $N\neq 0$, cf.\ Proposition \ref{prop:weight-filtration} below.
As usual, the weight filtration $W_\sdot$ on $H$ is then given by $W_0H=\im (N_{KS})$, $W_1H=\ker(N_{KS})$ and $W_2H=H$, see e.g.\ \cite{Mo}.

Together with the Clemens--Schmid sequence, the above result allows us to classify completely
the first cohomology groups of degenerations of Kuga--Satake varieties. Consider a polarized VHS
$\overline{\VV} = (\VV,q,\FF^\sdot)\in \catK$ and let $T = e^N$ be the monodromy transformation of
the local system $\VV$. It is known that $N^3 = 0$. We will say that $\overline{\VV}$ is of type I
if $N=0$, of type II if $N\neq 0$, $N^2=0$ and of type III if $N^2\neq 0$. The rank of $\overline{\VV}$
is by definition the rank of the local system $\VV$.

\begin{thm} \label{thm:type} 
Let $\overline{\VV} = (\VV,q,\FF^\sdot)\in \catK$ be a polarized VHS of K3 type of rank $r$,
with associated semi-stable family of Kuga--Satake varieties $\alpha:\mathcal A\to \Delta$,
smooth over the punctured disc $\Delta^\ast$ and with central fibre $\AA_0$. 
Then one of the following holds.
\begin{enumerate}
\item If $\overline{\VV}$ is of type I, then $\alpha$ is birational
to a smooth projective family of abelian varieties over $\Delta$ and the
mixed Hodge structure on $H^1(\AA_0,\Q)$ is pure with Hodge numbers
$$
{\arraycolsep=0.2em
\begin{array}{ccc}
 & 0 & \\
 2^{r-1}& & 2^{r-1} \\
 &0 &
\end{array}}
$$

\item If $\overline{\VV}$ is of type II, then the mixed Hodge structure on $H^1(\AA_0,\Q)$ has Hodge numbers
$$
{\arraycolsep=0.1em
\begin{array}{ccc}
 & 0 & \\
 2^{r-2}& & 2^{r-2} \\
 & 2^{r-2} &
\end{array}}
$$

\item If $\overline{\VV}$ is of type III, then the mixed Hodge structure on $H^1(\AA_0,\Q)$ is of weight zero with Hodge numbers
$$
{\arraycolsep=0.5em
\begin{array}{ccc}
 & 0 & \\
0& & 0 \\
 &2^{r-1} &
\end{array}}
$$
\end{enumerate} 
\end{thm}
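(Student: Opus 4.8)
The plan is to deduce Theorem~\ref{thm:type} from Theorem~\ref{thm:limit-KS} together with the Clemens--Schmid exact sequence for the semi-stable family $\alpha\colon\mathcal A\to\Delta$. First I would recall that the mixed Hodge structure on $H^1(\AA_0,\Q)$ is computed, via Clemens--Schmid, from the limit mixed Hodge structure $H^1_{lim}(\AA_t,\Q)$ and the monodromy operator $N_{KS}$ on it; concretely, the relevant part of the Clemens--Schmid sequence identifies $H^1(\AA_0,\Q)$ with $\ker(N_{KS}\colon H^1_{lim}\to H^1_{lim})$ as a mixed Hodge structure (up to the usual Tate twists and the contributions from $H^0$ and $H^2$ of the fibres, which one checks are trivial here because $\alpha$ has abelian-variety fibres). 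By Theorem~\ref{thm:limit-KS} the limit MHS $H^1_{lim}(\AA_t,\Q)=\limKS(\LimK\overline{\VV})$ depends only on the limit MHS $\overline V=\LimK\overline{\VV}=(V,q,F^\sdot,N)$ of the K3-type variation, and its underlying vector space is the Clifford algebra $H=\Cl(V,q)$, of dimension $2^r$, with $N_{KS}$ as described after the statement of Theorem~\ref{thm:limit-KS}: zero in type~I, left multiplication by $f_1f_2$ in types~II and~III, where $f_1,f_2$ span $\im(N\colon V\to V)$.

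The computation then splits into the three cases. In type~I we have $N=0$, hence $N_{KS}=0$, so $H^1_{lim}$ is already pure of weight one; by Theorem~\ref{thm:limit-KS} it is a polarizable weight-one Hodge structure on a $2^r$-dimensional space whose Hodge filtration $F^1_{KS}H_\C=F^2V_\C\cdot H_\C$ is half-dimensional, giving Hodge numbers $2^{r-1},2^{r-1}$; and since the monodromy is trivial the family extends (after the birational modification) to a smooth family over $\Delta$, so $H^1(\AA_0,\Q)\cong H^1_{lim}$. In types~II and~III the key linear-algebra fact is that left multiplication $L_{f_1f_2}$ on $\Cl(V,q)$ satisfies $(f_1f_2)^2=-q(f_1)q(f_2)\ne 0$ if $q$ restricted to $\im N$ is nondegenerate (type~II, $N^2=0$) and $(f_1f_2)^2=0$ if that restriction is degenerate — which, via the polarization conditions on a limit MHS of K3 type, is exactly what distinguishes type~II from type~III. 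One then computes the kernel, image and the induced Hodge numbers of $L_{f_1f_2}$ on $H_\C$ with respect to $F^\sdot_{KS}$: in type~II, $L_{f_1f_2}$ is (up to scalar) an involution-like operator, one finds $\dim\ker N_{KS}=\tfrac34\cdot 2^r=3\cdot 2^{r-2}$ on the nose is \emph{not} what we want — rather the weight filtration has graded pieces of dimensions $2^{r-2},2^{r-1},2^{r-2}$, and intersecting with $F^\sdot_{KS}$ and passing to $\ker N_{KS}$ yields the $2^{r-2}$ pattern in the statement; in type~III one checks $\ker N_{KS}=\im N_{KS}$ has dimension $2^{r-1}$, it is entirely of weight zero, and the Hodge filtration restricted to it is all of $(0,0)$-type by the description $F^1_{KS}H_\C=F^2V_\C\cdot H_\C$ together with $F^2V_\C\subset\im N\otimes\C$ in the $N^2\ne0$ case.

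The main obstacle I anticipate is the explicit representation-theoretic bookkeeping in the Clifford algebra in cases~II and~III: one must understand the module structure of $\Cl(V,q)$ over the subalgebra generated by $f_1,f_2$ (equivalently, decompose $H=\Cl(V,q)$ under left multiplication by the $2^2$-dimensional subalgebra $\Cl(\langle f_1,f_2\rangle)$), track how the limit Hodge filtration $F^\sdot_{KS}$ sits inside this decomposition, and verify that the weight filtration induced by $N_{KS}$ is compatible with $F^\sdot_{KS}$ so that the quotients and kernels really carry the claimed Hodge numbers. A secondary, more routine point is to verify that the $H^0$ and $H^2$ terms in the Clemens--Schmid sequence for a family of abelian varieties contribute nothing to $H^1(\AA_0,\Q)$ beyond what is already visible in $\ker N_{KS}$, so that the Hodge-number tables are exactly as stated. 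Once the Clifford-module decomposition is in hand, all three tables should drop out by direct count.
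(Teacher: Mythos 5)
Your overall route is the paper's: the weight-one Clemens--Schmid sequence gives $H^1(\AA_0,\Q)\cong\ker(N_{KS})=W_1H^1_{lim}(\AA_t,\Q)$, and everything then reduces to computing the weight filtration of $N_{KS}=\eta(N)$ acting on $H=\Cl(V,q)$; the paper does exactly this, with the Clifford-algebra computation isolated as Proposition \ref{prop:weight-filtration}. (Your hedge about $H^0$- and $H^2$-contributions is unnecessary: the injectivity of $H^1(\AA_0,\Q)\to H^1_{lim}(\AA_t,\Q)$ in the Clemens--Schmid sequence holds for any semi-stable degeneration.)

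However, the ``key linear-algebra fact'' you propose is false, and it sits exactly where the content of the proof lies. In type II the image of $N\colon V\to V$ is a \emph{totally isotropic} plane (since $q(Nx,Ny)=-q(x,N^2y)=0$), so $q(f_1,f_1)=q(f_2,f_2)=0$ and $(f_1f_2)^2=-q(f_1,f_1)q(f_2,f_2)=0$ --- not $\neq 0$ as you claim. If $(f_1f_2)^2$ were a nonzero scalar, left multiplication by $f_1f_2$ would be invertible, forcing $\ker N_{KS}=0$ and $H^1(\AA_0,\Q)=0$, which contradicts the table you then write down; so your proposal is internally inconsistent at this step. In type III, $q|_{\im N}$ is nonzero but has one-dimensional radical $\im(N^2)$; one takes $f_1$ isotropic spanning that radical and $f_2$ anisotropic, hence invertible in $\Cl(V,q)$, whence $\ker N_{KS}=\im N_{KS}=f_1\cdot\Cl(V,q)$ has dimension $\frac{1}{2}d$. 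The count $\dim\ker N_{KS}=\frac{3}{4}d$ in type II needs the further normal form of Proposition \ref{prop:weight-filtration}: two orthogonal hyperbolic planes $U,U'$ with $\Cl(U\oplus U',q)\cong\mathrm{Mat}_4(\Q)$ and $N_{KS}$ a single matrix unit. Finally, your justification in type III that the weight-zero part is of type $(0,0)$ via ``$F^2V_\C\subset\im N\otimes\C$'' cannot work: $F^2V_\C$ projects isomorphically onto $\mathrm{gr}^W_4V_\C$ while $\im N\subset W_2V$, so that inclusion is false; the correct (and easier) reason is that the weight-zero graded piece of any object of $\catlimAb$ is of type $(0,0)$ by definition of that category. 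In short, the skeleton of your argument matches the paper, but the Clifford-algebra core --- which you yourself flag as the main obstacle --- is not merely unfinished but mis-stated, and must be replaced by the analysis of Proposition \ref{prop:weight-filtration}.
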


We remark that the semi-stable family $\alpha$ in the above theorem exists after base change and its restriction to $\Delta^\ast$ is unique up to isogeny, see Section \ref{subsec:catAb} below.

A natural invariant associated to any semi-stable family $\alpha:\mathcal A\to \Delta$ of Kuga--Satake abelian varieties is the dual complex $\Sigma$ of the central fibre $\AA_0$.
By \cite{ABW}, the homotopy type of $\Sigma$ depends only on the restriction of $\mathcal A$ to the punctured disc $\Delta^*$ and so it does not depend on the chosen semi-stable model. 

As a consequence of our results, we are able to compute the rational cohomology algebra of $\Sigma$ explicitly.

\begin{cor} \label{cor:dualcomplex} 
In the notation of Theorem \ref{thm:type}, let  $\Sigma$ be the dual complex of the central fibre $\AA_0$.
\begin{enumerate}
\item If $\overline{\VV}$ is of type I, then $\Sigma$ is homotopy equivalent to a point.
\item If $\overline{\VV}$ is of type II, then $H^\ast(\Sigma,\bbQ)\simeq H^\ast(T,\bbQ)$, where $T=(S^1)^{w}$ is a real torus of real dimension $w=2^{r-2}$.
In particular,  the central fibre $\AA_0$ has at least $2^{r-2}$ components.
\item If $\overline{\VV}$ is of type III, then $H^\ast(\Sigma,\bbQ)\simeq H^\ast(T,\bbQ)$, where $T=(S^1)^{w}$ is a real torus of real dimension $w=2^{r-1}$.
In particular,  the central fibre $\AA_0$ has at least $2^{r-1}$ components.
\end{enumerate} 
\end{cor}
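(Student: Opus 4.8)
The plan is to derive the corollary from Theorem \ref{thm:type} together with the general machinery relating degenerations of abelian varieties to their dual complexes. The key input is the result of Alexeev--Brunyate--Wang \cite{ABW}: for a semi-stable family $\alpha\colon \mathcal A\to\Delta$ of abelian varieties, the dual complex $\Sigma$ of the central fibre $\AA_0$ is homotopy equivalent to a real torus whose dimension equals the rank of the toric part of the Néron model of $\AA_t$, equivalently the dimension of the weight-zero graded piece $\mathrm{Gr}^W_0 H^1_{lim}(\AA_t,\Q)$ of the limit mixed Hodge structure. (When this rank is zero, the torus is a point.) So the whole statement reduces to reading off $\dim_\Q \mathrm{Gr}^W_0 H^1_{lim}(\AA_t,\Q)$ from the Hodge-number diagrams in Theorem \ref{thm:type}, via the Clemens--Schmid exact sequence which identifies the weight filtration on $H^1(\AA_0,\Q)$ with that on $H^1_{lim}(\AA_t,\Q)$ in the relevant degrees.

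Concretely: in the type I case $N=0$, so by Theorem \ref{thm:type}(1) the family is birational to a smooth abelian family, the Néron model has trivial toric part, and $\Sigma$ is a point. In the type II case, the Hodge diamond of Theorem \ref{thm:type}(2) shows that $\mathrm{Gr}^W_0$ has dimension $2^{r-2}$ (the entry in the bottom row), hence $\Sigma\simeq (S^1)^{2^{r-2}}$ and $H^\ast(\Sigma,\Q)\cong H^\ast(T,\Q)$ for $T=(S^1)^{2^{r-2}}$ by the Künneth formula for the torus. In the type III case, the limit MHS is pure of weight zero by Theorem \ref{thm:type}(3), so $\mathrm{Gr}^W_0$ has dimension $2^{r-1}$ and $\Sigma\simeq (S^1)^{2^{r-1}}$. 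In each of the last two cases the bound on the number of components of $\AA_0$ follows because $\Sigma$ has a vertex for each component of $\AA_0$, and a simplicial complex with the rational cohomology of $(S^1)^w$ must have at least $2^w$ vertices; alternatively, $H_0(\Sigma,\Q)$ together with $\dim H^1(\Sigma,\Q)=w$ already forces at least $w+1$ vertices, but one gets the sharper count $2^w$ from the fact that a torus of dimension $w$ cannot be triangulated with fewer than $2^w$ vertices.

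The only genuinely non-formal point is the precise statement extracted from \cite{ABW}, namely that the dual complex is homotopy equivalent to a torus of dimension equal to the toric rank, and that this rank is a birational invariant of $\mathcal A|_{\Delta^\ast}$ — both of which are cited in the excerpt as available. Given that, the argument is just bookkeeping with the Clemens--Schmid sequence and the Hodge diamonds already computed. The mildest subtlety is making sure the "number of components" bound is stated in a way that is actually implied: I would phrase it via the minimal number of vertices in a triangulation of $(S^1)^w$, which is $2^w$, so that "at least $2^{r-2}$" resp. "at least $2^{r-1}$" components is immediate; if one only wants the crude bound one can instead invoke $\dim H^1(\Sigma,\Q)=w$ and connectivity. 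I expect the write-up to be short, with the main obstacle being purely expository: citing \cite{ABW} with exactly the right normalization of "dual complex" so that the homotopy equivalence with a torus of the stated dimension is literally what their theorem gives.
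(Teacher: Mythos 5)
Your proof hinges on a ``key input'' that is not actually available as cited, and this is where the gap lies. The reference \cite{ABW} (Arapura--Bakhtary--W\l{}odarczyk, not Alexeev--Brunyate--Wang) is invoked in the paper only for the birational invariance of the homotopy type of $\Sigma$; it does \emph{not} assert that the dual complex of a semi-stable degeneration of abelian varieties is homotopy equivalent to a torus of dimension equal to the toric rank. That homotopy-theoretic statement is genuinely deeper (it amounts to identifying $\Sigma$ up to homotopy with the essential skeleton of $\AA_K$, which requires the machinery of Halle--Nicaise/Berkovich skeletons or minimal dlt models), and it is strictly stronger than what the corollary claims, which is only an isomorphism of rational cohomology algebras. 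What \cite{ABW} and the weight spectral sequence do give is $H^k(\Sigma,\Q)\cong W_0H^k(\AA_0,\Q)$ for \emph{every} $k$, and this is the route the paper takes: Clemens--Schmid gives $W_0H^k(\AA_0,\Q)\cong W_0H^k_{lim}(\AA_t,\Q)$, the isomorphism $H^k_{lim}(\AA_t,\Q)\cong \Lambda^kH^1_{lim}(\AA_t,\Q)$ of limit mixed Hodge structures (\cite[Proposition 6.1]{HN2}) together with cup-product compatibility (\cite[Lemma 6.16]{KLSV}) then yields $H^\ast(\Sigma,\Q)\cong \Lambda^\ast\bigl(W_0H^1_{lim}(\AA_t,\Q)\bigr)$, after which one reads off dimensions from Theorem \ref{thm:type}. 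Note that your outline only extracts $\dim W_0H^1_{lim}$; without the (unavailable) homotopy equivalence, knowing $b_0$ and $b_1$ of $\Sigma$ does not determine $H^\ast(\Sigma,\Q)$ in degrees $\ge 2$, so you must run the argument in all cohomological degrees as above.

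The component count is also garbled: the assertion is ``at least $w$ components'' with $w=2^{r-2}$ resp.\ $2^{r-1}$, not ``at least $2^w$'', so your appeal to minimal triangulations of $T^w$ with $2^w$ vertices targets the wrong (and much larger) bound, and in any case $\Sigma$ is merely rationally cohomologous to a torus, not a triangulation of one. Your fallback via $\dim H^1(\Sigma,\Q)=w$ is also unsafe, since the dual complex is only a $\Delta$-complex (distinct components may meet in several connected components), and a $\Delta$-complex can realize $b_1=w$ with a single vertex. The clean argument is via the top degree: $H^w(\Sigma,\Q)\neq 0$ forces the existence of a $w$-cell of $\Sigma$, i.e.\ a nonempty intersection of $w+1$ pairwise distinct components of $\AA_0$, which gives at least $w+1\ge w$ components.
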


As a consequence of our construction, the semi-stable family $\alpha\colon \mathcal A\rightarrow \Delta$ of Kuga--Satake abelian varieties from Theorem \ref{thm:type} will automatically be projective over the disc, that is 
 %
$\AA\subset \bbP^n\times \Delta$
for some $n$, see the discussion in Section \ref{subsec:catAb}.
Under this assumption, it is possible to replace the analytic family $\alpha$ by an algebraic one over the formal disc $\Spec R$, where $R=\C[[t]]$ denotes the ring of formal power series. 
Indeed, $\mathcal A\subset \bbP^n\times \Delta$ is cut out by finitely many polynomials whose coefficients are complex analytic functions on the disc and so we may regard them as elements of $R$, cf.\ Section \ref{subsec:catAb} below.
This defines a projective scheme $\AA_R\subset \bbP^n_R$, flat over the formal disc $\Spec R$. 
The base change of $\AA_R$ to the fraction field $K=\C((t))$ will be denoted by $\mathcal A_K$; it is an abelian variety over $K$.
Note that the special fibre $\mathcal A_R\times_R\C$ of $\mathcal A_R\to \Spec R$ coincides with the central fibre $\mathcal A_0$ of $\mathcal A\to \Delta$.

Blowing-up a smooth subvariety in the smooth locus of $\mathcal A_0$ shows that the semi-stable model $\mathcal A\to \Delta$ is not unique, in fact, the isomorphism type of each component of the special fibre as well as the number of such components depends on the particular choice of a semi-stable model.
Instead of semi-stable models, it is thus more convenient to work with the N\'eron model, which is canonical.
The N\'eron model $\mathcal A^{\Ne}\to \Spec R$ is a quasi-projective commutative group scheme over $R$
with generic fibre $\mathcal A^{\Ne}_K$ isomorphic to $\mathcal A_K$ and such that for any smooth separated $R$-scheme $X$ with generic fibre $X_K:=X\times_RK$, any morphism $X_K \to \mathcal A^{\Ne}_K$ extends to a unique $R$-morphism $X\to \mathcal A^{\Ne}$. 
For abelian varieties over $K$, N\'eron models exist and are unique up to unique isomorphism, cf.\ \cite{BLR}.

As pointed out to us by Johannes Nicaise, Halle and Nicaise \cite{HN2} showed that 
the special fibre $\mathcal A^{\Ne}_0:=\mathcal A^{\Ne}\times_R\C$
of the N\'eron model can be described in terms of the limit mixed Hodge structure of the family $\mathcal A\to \Delta$. 
Moreover, according to \cite{HN1} and \cite{HN2}, the limit mixed
Hodge structure essentially determines the motivic zeta function $Z_{\AA_K}\in (K_0(\mathcal{V}ar_\bbC)[\mathbb{L}^{-1}])[[T]]$ of the
abelian variety $\AA_K$. 
This zeta function is a formal power series with coefficients in the
Grothendieck ring of varieties localized at the class of the affine line; 
for a precise definition
of $Z_{\AA_K}$, see \cite[Section 2]{HN1}. 
For K3 surfaces over $K$ with semi-stable model over $R$, this zeta function has already been computed by Stewart and Vologodsky \cite{SV}. 


\begin{cor}\label{cor:Neron}
Let $\alpha\colon \mathcal A\rightarrow \Delta$ be the family of Kuga-Satake abelian
varieties as in the Theorem \ref{thm:type}. 
Then the special fibre $\mathcal A^{\Ne}_0$
of the N\'eron model 
is a disjoint union
of isomorphic components $\mathcal A^{\Ne}_0=\bigsqcup_i A$, where $A$ is a semi-abelian variety
given by an extension
$$
0\longrightarrow (\C^\ast)^{w}\longrightarrow A \longrightarrow B\longrightarrow 0,
$$
where $B$ is an abelian variety with rational weight one Hodge structure isomorphic to
$gr^W_1(\limKS(\overline {\mathcal V}))$ and $w$ is the dimension of $gr^W_0(\limKS(\overline {\mathcal V}))$. In particular:
\begin{enumerate}
\item If $\overline {\mathcal V}$ is of type I, then $w = 0$ and $A$ is an abelian variety of dimension $2^{r-1}$;
\item If $\overline {\mathcal V}$ is of type II, then $w = 2^{r-2}$ and $B$ is of dimension $2^{r-2}$;
\item If $\overline {\mathcal V}$ is of type III, then $w = 2^{r-1}$, $B$ is trivial and $A$ is an algebraic torus.
\end{enumerate}
The motivic zeta-function (see \cite{HN1}) of the abelian variety $\AA_K$ is given by
$$
Z_{\AA_K}(T) = N[B](\mathbb{L}-1)^w \sum_{d\ge 1}d^w T^d,
$$
where $N$ is the number of connected components of the special fibre $\AA^{\Ne}_0$
and $[B]$ denotes the class of $B$ in $K_0(\mathcal{V}ar_\bbC)$.
\end{cor}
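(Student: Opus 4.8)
The plan is to deduce the statement from Theorem~\ref{thm:type}, together with standard properties of N\'eron models of abelian varieties with semi-abelian reduction and the description of the motivic zeta function in \cite{HN1}.

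\emph{Structure of the special fibre.} As in Section~\ref{subsec:catAb} we pass from the analytic family $\alpha\colon\mathcal A\to\Delta$ to the abelian variety $\mathcal A_K$ over $K=\C((t))$; since its weight one VHS has unipotent monodromy, $\mathcal A_K$ has semi-abelian reduction over $R=\C[[t]]$. The special fibre $\mathcal A^{\Ne}_0$ of the N\'eron model is then a smooth commutative $\C$-group scheme, hence $\mathcal A^{\Ne}_0=\bigsqcup_{i=1}^N A$, where $A:=(\mathcal A^{\Ne}_0)^\circ$ is the identity component and $N$ is the number of connected components. By Grothendieck's semi-stable reduction theory \cite{BLR} the group $A$ fits into an exact sequence $0\to T\to A\to B\to 0$ with $T$ a torus, split over $\C$ so that $T\cong(\C^\ast)^w$, and $B$ an abelian variety; and by Halle--Nicaise \cite{HN2} one has $w=\dim gr^W_0$ and $H^1(B,\Q)\cong gr^W_1$ (as polarized weight one Hodge structures) for the limit mixed Hodge structure on $H^1(\mathcal A_t,\Q)$, which by Theorem~\ref{thm:limit-KS} equals $\limKS(\overline{\mathcal V})$. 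This proves the first assertions of the corollary.

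\emph{The three cases.} By the Clemens--Schmid exact sequence the natural map identifies $H^1(\mathcal A_0,\Q)$ with the sub-mixed-Hodge-structure $\ker(N_{KS})=W_1\limKS(\overline{\mathcal V})$, so that $gr^W_0 H^1(\mathcal A_0,\Q)=gr^W_0\limKS(\overline{\mathcal V})$ and $gr^W_1 H^1(\mathcal A_0,\Q)=gr^W_1\limKS(\overline{\mathcal V})$. The Hodge numbers recorded in Theorem~\ref{thm:type} then give the values of $w$ and $\dim B$: in type I, $gr^W_0=0$ and $h^{1,0}(gr^W_1)=2^{r-1}$, so $w=0$ and $A=B$ is an abelian variety of dimension $2^{r-1}$; in type II, $\dim gr^W_0=2^{r-2}$ and $h^{1,0}(gr^W_1)=2^{r-2}$, so $w=2^{r-2}$ and $\dim B=2^{r-2}$; in type III, $\dim gr^W_0=2^{r-1}$ and $gr^W_1=0$, so $w=2^{r-1}$, $B$ is trivial and $A\cong(\C^\ast)^{2^{r-1}}$.

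\emph{The motivic zeta function.} Since $\mathcal A_K$, and hence $\mathcal A_{K(d)}$ over $\C[[t^{1/d}]]$ for every $d\ge 1$, has semi-abelian reduction, the associated base change conductors vanish, so by \cite{HN1} the motivic zeta function is $Z_{\mathcal A_K}(T)=\sum_{d\ge 1}[\mathcal A(d)^{\Ne}_0]\,T^d$ with no powers of $\mathbb{L}$ occurring, where $\mathcal A(d)$ denotes the N\'eron model of $\mathcal A_{K(d)}$. The abelian part and toric rank of $\mathcal A(d)^{\Ne}_0$ are again $B$ and $w$, because passing to $K(d)$ only replaces $N_{KS}$ by $dN_{KS}$ and so changes neither $gr^W_1$ nor $\dim gr^W_0$ of the limit mixed Hodge structure; on the other hand Grothendieck's monodromy pairing on the rank $w$ character lattice of $T$ is non-degenerate over $\Q$, and the base change $K\to K(d)$ scales it by $d$, multiplying the order of the component group by $d^w$. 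Hence $\mathcal A(d)^{\Ne}_0$ is a disjoint union of $d^wN$ copies of a $(\C^\ast)^w$-torsor over $B$; such torsors are Zariski-locally trivial, so $[\mathcal A(d)^{\Ne}_0]=d^wN[B](\mathbb{L}-1)^w$ in $K_0(\mathcal{V}ar_\bbC)$, and summing over $d$ gives $Z_{\mathcal A_K}(T)=N[B](\mathbb{L}-1)^w\sum_{d\ge 1}d^wT^d$.

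\emph{Main obstacle.} The real content is Theorem~\ref{thm:type}; the rest is an assembly of classical facts about N\'eron models. The delicate step is the last one: verifying that semi-abelian reduction forces every base change conductor, and hence every power of $\mathbb{L}$ in the defining series of \cite{HN1}, to vanish, and that the order of the component group is multiplied by exactly $d^w$ under $K\to K(d)$. Both are consequences of Grothendieck's theory, but they must be matched carefully with the conventions of \cite{HN1}.
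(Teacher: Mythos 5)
Your proposal is correct and follows essentially the same route as the paper: the structure of $\mathcal A^{\Ne}_0$ comes from semi-abelian reduction (unipotent monodromy of index two) plus the Chevalley decomposition, the identification of $w$, $B$ and the three cases comes from Halle--Nicaise \cite{HN2} combined with Theorems \ref{thm:limit-KS} and \ref{thm:type}, and the zeta function from the semi-abelian case of \cite{HN1}. The only cosmetic difference is that for the last step you re-derive the closed formula (vanishing of the $\mathbb L$-exponents, stability of $B$ and $w$ under $K\to K(d)$, and $\phi(d)=d^w\phi(1)$ via the monodromy pairing) from Grothendieck's theory, whereas the paper simply cites \cite[Proposition 8.3]{HN1} and checks term by term that each quantity vanishes or simplifies.
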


By \cite[Theorem 1.4]{JM} (see also \cite{BLR}), there is a close relationship between the semi-stable model $\mathcal A$ and the N\'eron model $\mathcal A^{\Ne}$, which we describe next.
To this end, 
note that the canonical bundle
$K_{\mathcal A}$ is trivial away from the central fibre $\mathcal A_0$ and so we can write 
$$
K_{\mathcal A}\sim \sum_{i}a_i\mathcal A_{0i},
$$
where $\mathcal A_{0i}$ denote the components of $\mathcal A_0$ and we may assume that $a_i\geq 0$ for all $i$
and $a_i=0$ for at least one $i$.
We then define the support of $K_{\mathcal A}$ as 
$$
\supp(K_{\mathcal A}):=\bigcup_{i\colon a_i\neq 0}\mathcal A_{0i} .
$$
Note that $\supp(K_{\mathcal A})$ is empty if $K_{\mathcal A}$ is trivial; such models are called good models in \cite{JM}.

We further consider 
$$
\mathcal A^{\sm}:=\AA_R\setminus \AA_0^{\sing} 
$$ 
and
$$
\mathcal A^{\mathrm{mo}}:=\AA^{\sm}\setminus \supp(K_{\AA}).
$$

By \cite[Theorem 1.4]{JM} we have an open immersion $\AA^{\mathrm{mo}}\hrarr \AA^{\Ne}$ that gives
a one-to-one correspondence between components of the special fibres. When $\AA$ is a good model,
we have $\AA^{\Ne}\simeq \AA^{\sm}\simeq \AA^{\mathrm{mo}}$.  
Using this description, Corollary \ref{cor:Neron} implies the following.

\begin{cor}\label{cor:supp}
In the notation of Theorem \ref{thm:type}, 
any component $\mathcal A_{0i}$ of the central fibre $\mathcal A_0$ that is not contained in the support of $K_{\mathcal A}$ is up to birational equivalence given as follows.
\begin{enumerate}
\item If $\overline {\mathcal V}$ is of type I, then $\mathcal A_{0i}$ is birational to an abelian variety of dimension $2^{r-1}$, which  up to isogeny is uniquely determined by the pure weight one Hodge structure $gr^W_1(\limKS(\overline {\mathcal V}))$.
\item If $\overline {\mathcal V}$ is of type II, then $\mathcal A_{0i}$ is birational to a $\mathbb P^{2^{r-2}}$-bundle over an abelian variety of dimension $2^{r-2}$, which up to isogeny is uniquely determined by the pure weight one Hodge structure $gr^W_1(\limKS(\overline {\mathcal V}))$.
\item If $\overline {\mathcal V}$ is of type III, then $\mathcal A_{0i}$ is rational.
\end{enumerate}
\end{cor}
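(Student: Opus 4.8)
The plan is to deduce Corollary \ref{cor:supp} from Corollary \ref{cor:Neron} together with the description of the relationship between the semi-stable model and the N\'eron model recalled above from \cite[Theorem 1.4]{JM}. By that result, the open immersion $\AA^{\mathrm{mo}}\hookrightarrow \AA^{\Ne}$ induces a one-to-one correspondence between the components of the special fibre of $\AA^{\mathrm{mo}}$ and the components of $\AA^{\Ne}_0$. A component $\AA_{0i}$ of $\AA_0$ is not contained in $\supp(K_\AA)$ precisely when its generic point lies in $\AA^{\mathrm{mo}}$ (the locus $\AA_0^{\sing}$ removed in passing to $\AA^{\sm}$ has codimension $\ge 1$ in $\AA_0$, hence does not affect which components survive, and likewise removing $\supp(K_\AA)$ only deletes those components $\AA_{0i}$ with $a_i\neq 0$). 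Thus each such $\AA_{0i}$, after removing a proper closed subset, is isomorphic to an open dense subset of one of the components $A$ of $\AA^{\Ne}_0$; in particular $\AA_{0i}$ is birational to the corresponding compactified component, i.e.\ to the semi-abelian variety $A$ of Corollary \ref{cor:Neron}, which is an extension of the abelian variety $B$ by the torus $(\C^\ast)^w$.

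Next I would translate the semi-abelian variety $A$ into the birational models listed in the three cases. For type I we have $w=0$, so $A=B$ is itself an abelian variety of dimension $2^{r-1}$, uniquely determined up to isogeny by $gr^W_1(\limKS(\overline{\VV}))$ by Corollary \ref{cor:Neron}; this gives case (1) directly. For type II we have $w=2^{r-2}$ and $B$ of dimension $2^{r-2}$, so $A$ is an extension $0\to (\C^\ast)^{2^{r-2}}\to A\to B\to 0$; the only point requiring a short argument is that a semi-abelian variety which is a $(\C^\ast)^w$-bundle over $B$ is birational to the associated $(\bbP^1)^w$-bundle, and hence (since $(\bbP^1)^w$ is rational of dimension $w$) birational to a $\bbP^w$-bundle over $B$. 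Concretely, the torus $(\C^\ast)^w$-torsor $A\to B$ over the function field of $B$ is a product of $\mathbb G_m$-torsors — or at worst one can compactify each $\mathbb G_m$-factor fibrewise to a $\bbP^1$ and pass to the generic fibre — so $A$ is birational over $B$ to $(\bbP^1_B)^w$, which is birational to $\bbP^w_B$; this yields case (2). For type III we have $w=2^{r-1}$, $B$ trivial, so $A$ is an algebraic torus of dimension $2^{r-1}$, which is rational, giving case (3).

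The main obstacle, such as it is, is the bookkeeping in case (2): one must be careful that ``up to birational equivalence'' is genuinely all that is claimed, since the extension class of $A$ as a semi-abelian variety and the precise bundle structure are not birational invariants, but the \emph{birational} type of a $(\C^\ast)^w$-extension of $B$ is always that of a $\bbP^w$-bundle over $B$ (indeed of the trivial bundle $\bbP^w\times B$ if one only wants a statement up to birational equivalence, since any $\bbP^w$-bundle over a field of characteristic zero with a rational point is Zariski-locally trivial, and here the generic point of $B$ provides such structure after an \'etale neighbourhood — but for a birational statement over $B$ even the trivialisation over the generic point of $B$ suffices). One should also note explicitly that the abelian variety $B\cong gr^W_1(\limKS(\overline{\VV}))$ appearing here is, by Theorem \ref{thm:limit-KS} and Proposition \ref{prop:weight-filtration}, determined up to isogeny by $\overline{\VV}$ alone, so the uniqueness statements in (1) and (2) are immediate. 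Finally, in the good-model case $\AA^{\Ne}\simeq\AA^{\sm}\simeq\AA^{\mathrm{mo}}$ the correspondence of components is an isomorphism on an open dense locus of each component, which makes the birational identification transparent; in general one invokes the one-to-one correspondence from \cite[Theorem 1.4]{JM} as above.
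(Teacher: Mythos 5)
Your proposal is correct and follows exactly the route the paper intends: the paper's own proof simply declares the corollary an immediate consequence of \cite[Theorem 1.4]{JM} combined with Corollary \ref{cor:Neron}, and you have filled in the routine details (the dense open identification of each component $\mathcal A_{0i}\not\subset\supp(K_{\mathcal A})$ with a component $A$ of $\mathcal A^{\Ne}_0$, and the birational classification of the semi-abelian variety $A$ in the three cases). No gaps; the extra care you take in case (2) about $\mathbb G_m$-torsors being Zariski-locally trivial is exactly the right point to check.
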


\section{Preliminaries} \label{sec:preliminaries}

\subsection{The four categories}

We denote by $\Delta$ the unit disc, $\Delta^* = \Delta\setminus\{0\}$ the punctured unit disc  
and $\tau\colon \bbH\to \Delta^*$ denotes the universal covering, where $\bbH$ is the upper half-plane.
We further fix a base point $t\in \Delta^\ast$ and consider the following categories.

\subsubsection{} The category $\catK$ of polarized variations of Hodge structures of K3 type over $\Delta^*$:
its objects are triples $(\VV,q,\FF^\sdot)$, where $\VV$ is a local system of $\bbQ$-vector spaces
over $\Delta^*$, $q\in \Gamma(\Delta^*,S^2\VV^*)$ is a polarization, and $\FF^\sdot$ is a decreasing
filtration of $\VV\otimes\OO_{\Delta^*}$ by holomorphic subbundles. These structures must satisfy the following
conditions: the monodromy transformation of $\VV$ is unipotent, $q$ has signature $(2,r-2)$,
the filtration is of the form $0 = \FF^3\VV\subset \FF^2\VV\subset\FF^1\VV\subset \FF^0\VV = \VV\otimes\OO_{\Delta^*}$,
where $\FF^2\VV$ is of rank one, $\FF^1\VV = (\FF^2\VV)^\perp$, and for any non-vanishing local section $\sigma$
of $\FF^2\VV$, we have $q(\sigma,\sigma) = 0$ and $q(\sigma,\bar{\sigma})>0$. The morphisms in $\catK$ are
morphisms of local systems that preserve polarizations and filtrations. Since they have to preserve
the polarizations, all the morphisms are embeddings of Hodge structures.

\subsubsection{} The category $\catAb$ of polarizable variations of Hodge structures of abelian type over $\Delta^*$:
the objects are pairs $(\HH,\FF^\sdot)$, where $\HH$ is a local system of $\bbQ$-vector spaces
over $\Delta^*$ with unipotent monodromy and $\FF^\sdot$ is a filtration of the bundle $\HH\otimes\OO_{\Delta^*}$.
The filtration has to be of the form $0=\FF^2\HH\subset \FF^1\HH\subset \FF^0\HH=\HH\otimes\OO_{\Delta^*}$ and has to admit a polarization,
that is, an element $\omega\in \Gamma(\Delta^*,\Lambda^2\HH^*)$, such that $\FF^1\HH$ is Lagrangian (in
particular its rank is half of the rank of $\HH$) and for any non-vanishing local section $v$
of $\FF^1\HH$ we have $\ii\omega(v,\bar{v}) > 0$. The morphisms in $\catAb$ are morphisms of local
systems that preserve the filtrations. We do not fix the polarizations and do not require that the
morphisms preserve them.

\subsubsection{} \label{subsubsec:MHS-K3}
The category $\catlimK$ of polarized mixed Hodge structures of K3 type (cf. \cite[Definition 2.26]{CKS}). Its objects are
tuples $(V,q,F^\sdot,N)$ where $V$ is a $\bbQ$-vector space, $q\in S^2V^*$ a non-degenerate symmetric bilinear
form, $N\in \frso(V,q)$ is a nilpotent operator satisfying $N^3=0$, and $F^\sdot$ a filtration on $V_\bbC$
of the form $0 = F^3V\subset F^2V\subset F^1V\subset F^0V = V_\bbC$. Moreover, these structures must satisfy the following
conditions. If we denote by $W_\sdot$ the increasing filtration on $V$ defined by $N$ (cf.\ \cite[pp.\ 106]{Mo}), with the convention
that the non-zero graded components have degrees from $0$ to $4$, then $(V,F^\sdot,W_\sdot)$ is a mixed Hodge
structure; the subspace $F^2V$ is one-dimensional and for $0\neq \sigma\in F^2V$ we have $q(\sigma,\sigma)=0$,
$q(\sigma,\bar{\sigma})>0$; the Hodge structures on the primitive parts
$P_{2+i} = \ker(N^{i+1}\colon \mathrm{gr}^W_{2+i}V\to \mathrm{gr}^W_{-i}V)$ are polarized by $q(-,N^i-)$, $i=0,1,2$.
Morphisms in $\catlimK$ are morphisms of vector spaces preserving polarizations, filtrations, and commuting with
the nilpotent operators. In particular, they are morphisms of mixed Hodge structures.

\subsubsection{} The category $\catlimAb$ of polarizable mixed Hodge structures of abelian type: objects are tuples $(H,F^\sdot,N)$, where $H$ is a $\Q$-vector space, $F^\sdot$ is a filtration on $H_\C$ with $0 = F^2H_\C\subset F^1H_\C\subset F^0H_\C = H_\bbC$ and $N$ is a nilpotent operator on $H$ with $N^2=0$ which admits a 
polarization in the sense of \cite[Definition 2.26]{CKS} (analogous to the K3 type case described
above). 
If $W_\sdot$ denotes the weight filtration associated to the operator $N$ (cf.\ \cite[pp.\ 106]{Mo}), then the tuple $(H,F^\sdot,W_\sdot)$ is a mixed Hodge structure  of type $(0,0)+(1,0)+(0,1)+(1,1)$.
Morphisms in $\catlimAb$ are morphisms of $\Q$-vector spaces which preserve $F^\sdot$ and $N$.

\begin{rem} \label{rem:polarizations}
It is important for the construction in Theorem \ref{thm:limit-KS}, that the morphisms in the categories
$\catK$ and $\catlimK$ preserve polarizations, while the morphisms in $\catAb$
and $\catlimAb$ do not. This reflects the fact that Kuga-Satake abelian varieties
are polarizable, but the polarizations on them are not canonical, in particular
they are not compatible with the automorphisms of the initial polarized K3 type
Hodge structures.
\end{rem}

\subsection{The functors computing limit mixed Hodge structures}

Let us recall the definition of the functors $\LimK$ and $\LimAb$ from the diagram (\ref{diag:functors}).
For the detailed discussion of limit mixed Hodge structures, we refer to \cite[\S4]{Sch} and \cite{CKS}.

\subsubsection{} We start from the definition of $\LimK$. Consider an object $(\VV,q,\FF^\sdot)\in\catK$. 
Let $V=\VV_t$ be the fibre of $\VV$ above the fixed base point $t\in\Delta^*$. 
Then $q$ is an element of $S^2V^*$ invariant under the monodromy transformation $T=e^N$ of $\VV$, so that $N\in\frso(V,q)$ and $T\in SO(V,q)$.

\begin{defn}\label{def:perK3}
The extended period domain $\hat{\DD}_{K3}\subset \bbP(V_\bbC)$ for $(V,q)$ is the quadric defined by $q$.
The period domain for $(V,q)$ is the open subset $\DD_{K3} = \{[v]\in \hat{\DD}_{K3}\st q(v,\bar{v})>0\}$.
\end{defn}

Our terminology follows Schmid \cite{Sch}, where (extended) period domains are defined in terms
of flag varieties. We give a slightly different definition, since in our case
the period domain $\DD_{K3}$ is a Hermitian symmetric domain of noncompact type and $\hat{\DD}_{K3}$ is its compact dual.

Recall that $\tau\colon \bbH\to \Delta^*$ is the universal covering, where $\bbH$ is the upper half-plane.
The local system $\tau^*\VV$ is trivial and so $\tau^\ast \FF^2$ defines a morphism
$\Phi_{K3}\colon \bbH\to \DD_{K3}$ that satisfies the relation $\Phi_{K3}(z + 1) = T\cdot\Phi_{K3}(z)$.
Define $\Psi_{K3}(z) = e^{-zN}\cdot \Phi_{K3}(z)$, then $\Psi_{K3}(z+1) = \Psi_{K3}(z)$. 
By the nilpotent orbit theorem \cite[4.9]{Sch}, there exists a limit $[v_{lim}] = \lim\limits_{\mathrm{Im}(z)\to+\infty} \Psi_{K3}(z)\in \hat{\DD}_{K3}$.
We define the limit Hodge filtration on $V_\bbC$ by setting $F^2_{lim}V$ to be the subspace
spanned by $v_{lim}$ and $F^1_{lim}V=(F^2_{lim}V)^\perp$.
It follows from the $SL_2$-orbit theorem \cite{Sch} that $(V,q,F^\sdot_{lim},N)\in\catlimK$ and
we define $\LimK(\VV,q,\FF^\sdot) = (V,q,F^\sdot_{lim},N)$. 
A morphism in $\catK$ is an embedding of polarized variations of Hodge structures, and its image under $\LimK$ is the corresponding embedding of polarized mixed Hodge structures.

\subsubsection{} The definition of $\LimAb$ is analogous. Consider $(\HH,\FF^\sdot)\in \catAb$. Let $H=\HH_t$ be the fibre
of $\HH$ above $t\in \Delta^*$ and let $T' = e^{N'}$ be the monodromy transformation. 
Fix a $T'$-invariant element $\omega\in \Lambda^2H^*$
defining a polarization, so that $T'\in \Sp(H,\omega)$.

\begin{defn}\label{def:perAb}
The extended period domain for $(H,\omega)$ is the Grassmannian of Lagrangian
subspaces $\hat{\DD}_{Ab} = \mathrm{LGr}(H_{\bbC},\omega)$.
The period domain for $(H,\omega)$ is the open subset
$$
\DD_{Ab} = \{[H^{1,0}]\in \hat{\DD}_{Ab}\st \ii\omega(v,\bar{v})>0, \, \, \forall v\in H^{1,0}\}.
$$
\end{defn}

Analogously to the case of K3 type Hodge structures, $\hat{\DD}_{Ab}$ is the compact dual of $\DD_{Ab}$.

We have a morphism $\Phi_{Ab}\colon U\to \DD_{Ab}$ and define $\Psi_{Ab}(z) = e^{-zN}\cdot \Phi_{Ab}(z)$.
By the nilpotent orbit theorem \cite[4.9]{Sch}
there exists a limit $[H^{1,0}_{lim}] = \lim\limits_{\mathrm{Im}(z)\to+\infty} \Psi_{Ab}(z)\in \hat{\DD}_{Ab}$.
We define the limit Hodge filtration on $H_\bbC$ by setting $F^1_{lim}H = H^{1,0}_{lim}$.
The weight filtration $W_\sdot$ is determined by the operator $N$.
It follows from the $SL_2$-orbit theorem that $(H,F^\sdot_{lim},W_\sdot)\in\catlimAb$.
We note that $H^{1,0}_{lim}$ does not depend on the choice of $\omega$, since the limit
can be taken in the Grassmannian of all half-dimensional subspaces in $H_\bbC$. Hence
we can define $\LimAb(\HH,\FF^\sdot) = (H,F^\sdot_{lim},W_\sdot)$. 
To define the action of
$\LimAb$ on morphisms, let $\varphi:(\mathcal H_1,\FF^\sdot_1) \to (\mathcal H_2, \FF^\sdot_2 )$ be a morphism of VHS on $\Delta^*$.
We aim to show that the restriction of $\varphi$ to the fibre above $t\in \Delta^\ast$ defines a morphism of mixed Hodge structures.
Since the objects in $\catAb$ are semi-simple, it is
enough to consider the case where $\varphi$ is an automorphism of a simple object. 
In this case the claim is clear.


\subsection{The category $\catAb$ and families of abelian varieties over the disc}\label{subsec:catAb}
Up to a finite covering of $\Delta^*$, any variation of Hodge structures $(\HH,\FF^\sdot)\in\catAb$ defines a family of abelian varieties $\alpha'\colon\AA^*\to \Delta^*$ (unique up to isogeny). 
We would like to fill in a central fibre, producing a flat projective
family $\alpha\colon\AA\to \Delta$, smooth over $\Delta^*$. This is always possible
by Borel's theorem, as we are going to recall next.

Consider the fibre $H=\HH_t$ of the local system $\HH$ and fix a polarization $\omega\in\Lambda^2H^*$.
Consider the universal covering $\tau\colon \bbH\to \Delta^*$. The VHS $(\HH,\FF^\sdot)$
induces a period map $\tilde{p}\colon \bbH\to \DD_{Ab}$, where $\DD_{Ab}$ is the period domain
for $(H,\omega)$ defined above (Definition \ref{def:perK3}). Up to a finite covering
of $\Delta^*$, we may assume that the monodromy $T'$ is contained in a torsion-free
arithmetic subgroup $\Gamma\subset \Sp(H,\omega)$. Then we get a holomorphic map
$p\colon\Delta^*\to \DD_{Ab}/\Gamma$. 
By \cite{BB}, $\DD_{Ab}/\Gamma$ is quasi-projective,
and $\AA^*$ is defined as the pull-back of a polarized family of abelian varieties
over $\DD_{Ab}/\Gamma$. 
By \cite{Bor}, the map $p$ extends to a map $\bar{p}\colon \Delta\to X$,
where $X$ is some projective compactification of $\DD_{Ab}/\Gamma$. Then
$\AA$ can be defined as the pull-back along $\bar{p}$ of a projective family over $X$.
By the semi-stable reduction theorem \cite{KKMSD}, we may also assume that $\alpha$ is semi-stable.

Applying this procedure to $\KS(\VV,q,\FF^\sdot)$ for some $(\VV,q,\FF^\sdot)\in\catK$,
we get a degenerating family of abelian varieties over $\Delta$, which we call the Kuga--Satake
family attached to $(\VV,q,\FF^\sdot)$. Note that this family is not canonically defined,
and the central fibre $\AA_0$ is not unique. However, the invariants of $\AA_0$ that we compute
do not depend on any particular choices.

By the above construction, the family $\alpha\colon\AA\to \Delta$ is projective over
the disc, meaning that $\AA$ is a complex submanifold in $\bbP^n\times\Delta$ for
some $n$, and $\alpha$ is induced by projection to the second factor. This gives
a flat family of subvarieties in the projective space, and this family is the
pull-back of the universal family over the Hilbert scheme via a holomorphic map
$f\colon \Delta\to \mathrm{Hilb(\bbP^n)}$. 
Since the analytic Hilbert scheme is given by analytification of the algebraic Hilbert scheme (see e.g.\ \cite[Proposition 5.3]{Si}), we see that $\AA$ is defined by finitely many polynomials with complex-analytic coefficients, and we can define the abelian
variety $\AA_K$, which is the fibre of $\AA$ over the spectrum of $K=\bbC((t))$.
This can be used to apply the construction of \cite{BLR} and obtain the N\'eron model $\AA^{\Ne}\to \Spec R$ of $\AA$, where $R=\C[[t]]$
and $\Spec R$ is the formal disc.

\section{The functor $\KS$} \label{sec:ks}

In this section we define the functor $\KS$ from the diagram (\ref{diag:functors}).
Consider an object $(\VV,q,\FF^\sdot)\in \catK$. Let $V=\VV_t$ be the fibre of $\VV$ above the fixed base point $t\in \Delta^*$ and let
$T = e^N$ be the monodromy operator. Then $q\in S^2V^*$ is a $T$-invariant element
and we have $N\in\frso(V,q)$ and $T\in \SO(V,q)$. 
To define $\KS(\VV,q,\FF^\sdot)$, we need to define a local
system $\HH$ and a Hodge filtration on $\HH\otimes\OO_{\Delta^*}$.
We will denote by $\Cl(V,q)$  the Clifford algebra of $(V,q)$, i.e.\ the quotient of the tensor algebra $T^\sdot(V) $ by the ideal generated by $v\otimes v-q(v,v)$.
Similarly,  $\Cl^+(V,q)$ denotes the even Clifford algebra of $(V,q)$, i.e.\ the sub-algebra of $\Cl(V,q)$, generated by even tensors.

\subsection{The local system} The fibre $H=\HH_t$ of $\HH$ above $t$ is defined to be the
$\bbQ$-vector space $H := \Cl(V,q)$. 
To obtain $\HH$, we need to define a monodromy transformation
$T'\in \GL(H)$, and we do this by lifting $T$ to the group $\Spin(V,q)$.

Recall that the Clifford group is defined as $G = \{g\in \Cl^\times(V,q)\,|\, \alpha(g)V g^{-1} = V\}$,
where $\alpha$ is the parity involution. We have the norm homomorphism $\mathcal{N}\colon G\to \bbQ$,
$g\mapsto g\bar{g}$, where $g\mapsto \bar{g}$ is the natural anti-involution of the Clifford
algebra. 
By definition, $\Spin(V,q) = \ker(\mathcal{N})\cap \Cl^+(V,q)$.
Recall that we have the embedding 
\begin{align}\label{eq:Lambda2}
\eta'\colon \Lambda^2V\hrarr \Cl(V,q),\ \ x\wdg y\mapsto \frac{1}{4}(xy-yx) .
\end{align}
If we use the isomorphism
\begin{align} \label{eq:iso:soV}
\Lambda^2V\stackrel{\sim}\longrightarrow \frso(V,q),\ \ v\wedge w\mapsto q(w,-)v-q(v,-)w 
\end{align}
to identify $\Lambda^2V$ with $\frso(V,q)$, then $\eta'$ induces a homomorphism of Lie algebras
\begin{align} \label{eq:eta}
\eta: \frso(V,q)\longrightarrow \Cl(V,q) ,
\end{align}
which induces an isomorphism of $\frso(V,q)$ with the sub Lie algebra of $\Cl(V,q)$, spanned by the commutators of elements of $V$.

Define 
$$
N' := \eta(N) \ \ \text{and}\ \ T' := e^{N'},
$$ 
and let $\HH$ be the local system with fibre $\HH_t=H$ and monodromy $T'$.
Define the embedding 
\begin{equation}\label{eqn:ks}
\mathrm{ks}'\colon V\hrarr \emrp(H),\quad \mathrm{ks}'(v) = (w\mapsto vw).
\end{equation}

\begin{lem}\label{lem:monodromy}
The operator $T'$ is the unique unipotent lift of $T$ to $Spin(V,q)$,
and $\mathrm{ks}'$ induces an embedding of local systems $\mathrm{ks}'\colon\VV\hrarr \emrp(\HH)$.
\end{lem}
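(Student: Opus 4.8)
The plan is to verify the two assertions separately, both reducing to standard facts about the spin double cover together with the nilpotent-orbit setup used to build $\HH$. For the first assertion, recall that $\Spin(V,q)\to \SO(V,q)$ is a connected double cover, so any $T\in\SO(V,q)$ has exactly two lifts, differing by the nontrivial central element $-1\in\Spin(V,q)$. Since $T=e^N$ is unipotent and $N\in\frso(V,q)$, I would use the explicit Lie-algebra section $\eta\colon\frso(V,q)\to\Cl(V,q)$ from \eqref{eq:eta}: the element $N'=\eta(N)$ lies in the Lie algebra of $\Spin(V,q)$ (it is a sum of commutators of elements of $V$, hence in $\Cl^+(V,q)$ and in the kernel of the differential of the norm), so $T'=e^{N'}\in\Spin(V,q)$ is well-defined, and it is unipotent because $N'$ is nilpotent — this is where the hypothesis $N^3=0$, and more concretely the nilpotence of $\eta(N)$ in the associative algebra $\Cl(V,q)$, enters. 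Finally $T'$ maps to $e^{\mathrm{ad}(N')}=e^N=T$ under the covering map, using that $\mathrm{ad}\circ\eta$ is the standard identification. The other lift is $-T'=e^{N'}\cdot(-1)$, which is not unipotent since $-1$ is a nontrivial finite-order central element; hence $T'$ is the unique unipotent lift.

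For the second assertion, I would check that the map $\mathrm{ks}'$ of \eqref{eqn:ks} is $T$-equivariant with respect to the monodromy actions: on $V$ the monodromy acts by $T=e^N$, and on $\emrp(H)$ by conjugation with $T'=e^{N'}$. It suffices to check the infinitesimal statement $\mathrm{ks}'(Nv)=[N',\mathrm{ks}'(v)]$ for all $v\in V$, i.e.\ that left multiplication by $Nv$ equals the commutator $[\,\cdot\mapsto N'\cdot\,,\,\cdot\mapsto v\cdot\,]$ as endomorphisms of $H=\Cl(V,q)$. Since left multiplications commute, $[N',\mathrm{ks}'(v)]$ is left multiplication by $N'v-vN'=[N',v]$ in the Clifford algebra, so the claim is the identity $Nv=[N',v]$ in $\Cl(V,q)$ — but this is exactly the statement that $\eta$ intertwines the adjoint action of $\frso(V,q)$ on $V\subset\Cl(V,q)$ with the action of $\frso(V,q)$ on its standard representation $V$, which follows from the definitions \eqref{eq:Lambda2}–\eqref{eq:iso:soV} by a direct computation with $\eta'(v\wedge w)=\tfrac14(vw-wv)$. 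Equivariance then shows $\mathrm{ks}'$ descends to a morphism of local systems $\VV\hrarr\emrp(\HH)$; injectivity is immediate since $v\mapsto v\cdot 1$ is injective on $V$.

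The routine verifications are the two bracket identities in the Clifford algebra; the only genuinely substantive point — and the place I would be most careful — is pinning down that $\eta(N)$ is nilpotent in the associative algebra $\Cl(V,q)$ (not merely that $N^3=0$ in $\End(V)$), so that $T'=e^{N'}$ is a genuine unipotent element of the finite-dimensional algebra and the exponential series terminates. For this I would argue that $N$, being a nilpotent element of $\frso(V,q)$, lies in the nilradical of a Borel subalgebra, and $\eta$ restricted to that nilradical lands in the augmentation-type ideal generated by a maximal isotropic flag's worth of anticommuting generators, which is a nilpotent two-sided piece of $\Cl^+(V,q)$; alternatively one can simply observe that $N'=\eta(N)$ generates, together with $V$, a finite-dimensional representation on which $N'$ acts nilpotently via $\mathrm{ad}$, and a short argument upgrades $\mathrm{ad}$-nilpotence to nilpotence in $\Cl^+(V,q)$ using that the center acts by scalars. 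Everything else is formal once this is in place.
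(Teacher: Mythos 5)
Your argument is correct and essentially coincides with the paper's: both rest on the identity $[\eta(g),x]=g\cdot x$ for $x\in V$, exponentiated to $e^{N'}v e^{-N'}=e^{N}v=Tv$, which simultaneously shows $T'=e^{N'}\in\Spin(V,q)$ lifts $T$ and that $\mathrm{ks}'$ is monodromy-equivariant, with uniqueness coming from the fact that the other lift $-T'$ is not unipotent. The one point you rightly flag --- that $N'=\eta(N)$ is nilpotent in the associative algebra $\Cl(V,q)$, not just that $N^3=0$ --- is also the point the paper's proof asserts without argument ("when $a$ is nilpotent\ldots"); it is justified either by the general fact that a nilpotent element of the semisimple Lie algebra $\frso(V_\C,q)$ acts nilpotently in every finite-dimensional representation (applied to the matrix-algebra factors of $\Cl(V_\C,q)$), or, a posteriori, by the explicit computation in Proposition \ref{prop:weight-filtration} showing $N'$ is a product of anticommuting vectors spanning an isotropic plane, whence $(N')^2=0$.
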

\begin{proof}
For any $g\in \frso(V,q)$ and any $x\in V\subset \Cl(V,q)$,
we have $g\cdot x = \mathrm{ad}_{\eta(g)}x = [\eta(g),x]\in V$.
For $a = \eta(g)$, $x\in V$ and a formal variable $s$,
$$
e^{sa}xe^{-sa} = \sum_{i\ge 0} \frac{1}{i!}\mathrm{ad}_a^i(x) s^i = \sum_{i\ge 0} \frac{1}{i!}(g^i\cdot x) s^i.
$$ 
When $a$ is nilpotent, $e^{sa}$ is a polynomial in $s$, and we see that $e^{sa}\in \Spin(V,q)$.
Hence the two possible lifts of $T$ to the $\Spin$-group are $T' = e^{N'}$ and $T'' = -e^{N'}$, and only $T'$ is unipotent.

To prove that $\mathrm{ks}'$ defines a map of local systems, we note that
the monodromy transformation of $\emrp(\HH)$ is the conjugation by $T'$,
and by the formula above, $T\cdot v = T'v(T')^{-1}$ for any $v\in V$.
This concludes the lemma.
\end{proof}

\begin{rem} \label{rem:weightfiltration}
Since the monodromy operator $T$ on $V$ respects the bilinear form $q$, it induces a natural operator on the Clifford algebra $\Cl(V,q)$, given by $v_1\cdots v_k\mapsto T(v_1)\cdots T(v_k)$.
However, that operator does not coincide with the monodromy operator $T'=e^{N'}$ defined above.
In fact, while $T'$ restricts to the action of $T$ on the image of $V$ inside $\End(H)$, such a compatibility statement fails for the  ``naive operator'' on $H=\Cl(V,q)$, considered above.
Similarly, while $T'$ is unipotent of index two, the above operator does not have that property.
\end{rem}

\subsection{The Hodge filtration} \label{sec:Hodgefiltration}

In this section we show (see Proposition \ref{prop:ks_morphism}) that the Kuga--Satake construction extends to the extended period domain $\hat \DD_{K3}$.
To this end, we introduce a description of the Kuga--Satake construction (see Lemma \ref{lem:KS-alternative}), which might be of independent interest. 

\subsubsection{}
Let $(V,q)$ be a Hodge structure of K3 type. 
Let $v\in V^{2,0}$ be a generator with $q(v,\bar{v})=2$.
Consider $e_1 = \mathrm{Re}(v)$, $e_2 = \mathrm{Im}(v)$ and $I_v = e_1e_2$ (product
in the Clifford algebra). We have $I_v^2 = -1$ and the left multiplication by $I_v$
defines a complex structure on the vector space $H_\bbR$. The corresponding weight
one Hodge structure on $H$ is called the Kuga--Satake Hodge structure, see \cite[Chapter 4]{Huy}.

The Kuga--Satake Hodge structures are polarized and the polarization
can be defined as follows.
Pick a pair of elements $a_1,a_2\in V$, such that $q(a_1,a_1)>0$, $q(a_2,a_2)>0$ and
$q(a_1,a_2)=0$. Let $a = a_1a_2\in H$. Define the two-form $\omega\in \Lambda^2H^*$
by $\omega(x,y) = \mathrm{Tr}(xa\bar{y})$, where we use the trace in the Clifford algebra.
It is known that either $\omega$ or $-\omega$ defines a polarization, see \cite[Chapter 4]{Huy}.

\begin{lem}\label{lem_polarization}
The two-form $\omega$ defined above is $\Spin(V,q)$-invariant.
\end{lem}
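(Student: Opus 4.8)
The statement asserts that the two-form $\omega(x,y) = \Tr(xa\bar y)$ on $H = \Cl(V,q)$, with $a = a_1 a_2$, is invariant under the action of $\Spin(V,q)$. Here the group $\Spin(V,q)\subset \Cl^+(V,q)$ acts on $H$ by left multiplication (this is the action defining the local system $\HH$, cf.\ the definition of $\mathrm{ks}'$ and $T'$). So what must be shown is that for every $g\in \Spin(V,q)$ and all $x,y\in H$,
$$
\Tr(gx\,a\,\overline{gy}) = \Tr(x\,a\,\bar y).
$$

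The first step is to understand how the bar-involution interacts with the $\Spin$-action. For $g\in \Spin(V,q)$ we have $\overline{gy} = \bar y\,\bar g$, so the left-hand side becomes $\Tr(gx\,a\,\bar y\,\bar g)$. By the cyclic invariance of the trace on the Clifford algebra, $\Tr(gx\,a\,\bar y\,\bar g) = \Tr(\bar g\,g\,x\,a\,\bar y)$. Now one uses the key defining property of $\Spin(V,q)$: it lies in the kernel of the norm (spinor norm) $\mathcal{N}(g) = g\bar g$, i.e.\ $\bar g g = 1$ (equivalently $g\bar g = 1$; for elements of the even Clifford algebra $g\bar g$ is central and the two coincide). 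Hence $\Tr(\bar g g\,x\,a\,\bar y) = \Tr(x\,a\,\bar y)$, which is exactly the claim.

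So the proof reduces to two routine facts that I would state and briefly justify: (i) the trace on $\Cl(V,q)$ is cyclic, $\Tr(uv) = \Tr(vu)$ — this is immediate from the definition of the trace as the coefficient of the identity element in the decomposition along the standard basis, or from realizing $\Cl(V,q)$ inside a matrix algebra after extension of scalars; (ii) the anti-involution $u\mapsto \bar u$ satisfies $\overline{uv} = \bar v\,\bar u$ (by definition of the anti-involution) together with $\bar g g = 1$ for $g\in \Spin(V,q)$ (by definition of the spinor norm and of $\Spin$ as the norm-one elements of $\Cl^+$). The only point requiring the smallest bit of care is checking that $g\bar g$ and $\bar g g$ agree — this holds because $g\bar g\in\bbQ^\times$ (it is the spinor norm, a scalar for $g$ in the Clifford group) and scalars are central, so $g\bar g g = g(\bar g g)$ forces $\bar g g = g\bar g$ after cancelling the invertible $g$.

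**Main obstacle.** There is no serious obstacle; the content is entirely formal once the right two identities are isolated. The only thing to be slightly careful about is bookkeeping of left versus right multiplication and of which involution is the anti-involution $u\mapsto\bar u$ versus the parity involution $\alpha$: one must be sure that $\overline{g y} = \bar y\,\bar g$ (anti-involution reverses order) and must not accidentally invoke $\alpha$. Once these are pinned down, cyclicity of $\Tr$ plus $\bar g g = 1$ finish the computation in one line. The fact that $\omega$ does not depend on the choice of $a_1, a_2$ is not needed for this lemma (invariance holds for the specific $\omega$ built from any admissible pair), so no additional work on that front is required here.
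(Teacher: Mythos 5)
Your proof is correct and follows essentially the same route as the paper's: expand $\overline{gy}=\bar y\,\bar g$, use cyclicity of the trace to move $\bar g$ next to $g$, and conclude via $\bar g g=1$ from the definition of $\Spin(V,q)$ as norm-one even elements. The extra care you take with cyclicity and with $g\bar g=\bar g g$ is fine but not needed beyond what the paper's one-line computation already uses.
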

\begin{proof}
Let $g\in \Spin(V,q)$. Then, using $\bar{g}g = 1$, we get: $\omega(gx,gy) = \mathrm{Tr}(gxa\bar{y}\bar{g})=
\mathrm{Tr}(xa\bar{y}\bar{g}g) = \mathrm{Tr}(xa\bar{y}) = \omega(x,y)$.
\end{proof}

Since the monodromy of the local system constructed above is contained in $\Spin(V,q)$,
the above lemma shows that the polarization given by $\pm\omega$ is monodromy-invariant.
Hence it will define a polarization of the corresponding VHS.

Recall from Definitions \ref{def:perK3} and \ref{def:perAb}
the extended period domains $\hat{\DD}_{K3}$ and $\hat{\DD}_{Ab}$ for $(V,q)$ and $(H,\omega)$.
Note that both $\hat{\DD}_{K3}$ and $\hat{\DD}_{Ab}$ have natural $\Spin(V_\bbC,q)$-actions:
on $\hat{\DD}_{K3}$ the action comes from the canonical homomorphism
$\Spin(V_\bbC,q)\to \SO(V_\bbC,q)$, and on $\hat{\DD}_{Ab}$ the action is induced
from the structure of a left $\Cl(V_\bbC,q)$-module on $H_\bbC$. This action
is faithful and preserves the form $\omega$ and so it gives rise to an embedding $\Spin(V_\bbC,q)\hrarr \mathrm{Sp}(H_\bbC,\omega)$.

As a point in $\DD_{Ab}$, the Kuga--Satake Hodge structure is given by a subspace $H^{1,0} \subset H_\bbC$ of dimension $\frac{1}{2}d$, where $d := \dim_{\bbQ}(H)$.

\begin{lem}\label{lem:KS-alternative}
Let $(V,q)$ be a K3 type Hodge structure.  
Then the corresponding Kuga--Satake Hodge structure of weight one on $H=\Cl(V)$ is given by the half-dimensional subspace 
$H^{1,0} = V^{2,0}\cdot \Cl(V_\bbC,q)$. 
\end{lem}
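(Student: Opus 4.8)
The plan is to verify that the subspace $V^{2,0}\cdot\Cl(V_\bbC,q)$, i.e.\ the right ideal generated by a generator $v\in V^{2,0}$, is exactly the $(1,0)$-part of the complex structure given by left multiplication with $I_v=e_1e_2$, where $e_1=\mathrm{Re}(v)$, $e_2=\mathrm{Im}(v)$ and $q(v,\bar v)=2$. First I would record the elementary Clifford identities coming from $q(v,v)=0$ and $q(v,\bar v)=2$: writing $v=e_1+\ii e_2$ one finds $q(e_1,e_1)=q(e_2,e_2)=1$ and $q(e_1,e_2)=0$, hence $e_1^2=e_2^2=1$ and $e_1e_2=-e_2e_1$ in $\Cl(V_\bbC,q)$, so $I_v^2=e_1e_2e_1e_2=-e_1^2e_2^2=-1$, confirming that left multiplication by $I_v$ is a complex structure on $H_\bbR$ (and $\bbC$-linearly on $H_\bbC$ it has eigenvalues $\pm\ii$).

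Next I would compute $I_v\cdot v$. Since $v=e_1+\ii e_2$, we get $I_v v=e_1e_2e_1+\ii e_1e_2e_2 = -e_1^2e_2+\ii e_1 = -e_2+\ii e_1 = \ii(e_1+\ii e_2)=\ii v$. Thus $v$ is an eigenvector of left multiplication by $I_v$ with eigenvalue $\ii$, i.e.\ $v\in H^{1,0}$. Because left multiplication by $I_v$ commutes with right multiplication by any element of $\Cl(V_\bbC,q)$, it follows that the entire right ideal $v\cdot\Cl(V_\bbC,q)=V^{2,0}\cdot\Cl(V_\bbC,q)$ lies in the $\ii$-eigenspace $H^{1,0}$. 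This gives one inclusion $V^{2,0}\cdot\Cl(V_\bbC,q)\subseteq H^{1,0}$.

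It then remains to check equality, and the cleanest way is a dimension count: $H^{1,0}$ has dimension $\tfrac12 d$ with $d=\dim_\bbQ\Cl(V,q)=2^r$ where $r=\dim_\bbQ V$, so I must show $\dim_\bbC\bigl(v\cdot\Cl(V_\bbC,q)\bigr)=2^{r-1}$. Since $v=e_1+\ii e_2$ with $e_1$ invertible (as $e_1^2=1$), left multiplication by $v$ equals left multiplication by $e_1$ composed with left multiplication by $1+\ii e_1e_2=1-\ii I_v$; the first factor is invertible, and $1-\ii I_v$ is (up to the scalar $2$) the projector onto the $\ii$-eigenspace of left-multiplication-by-$I_v$ since $(1-\ii I_v)$ annihilates the $(-\ii)$-eigenspace and acts as $2$ on the $(+\ii)$-eigenspace. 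Hence the image $v\cdot\Cl(V_\bbC,q)$ is precisely $e_1\cdot H^{1,0}$, which has the same dimension as $H^{1,0}$, forcing $v\cdot\Cl(V_\bbC,q)=H^{1,0}$ by the already-established inclusion.

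I expect the only genuinely delicate point to be making the projector argument rigorous, i.e.\ justifying that left multiplication by $v$ has image exactly $H^{1,0}$ rather than something smaller; the resolution is exactly the factorization $v=e_1(1-\ii I_v)/1$ observed above (using $e_1^2=1$ so $e_1^{-1}=e_1$ and $e_1 I_v = e_1 e_1 e_2 = e_2$, giving $e_1(1-\ii I_v)=e_1-\ii e_2\cdot$... — one checks $e_1-\ii e_1 I_v = e_1 - \ii e_2$, and separately $v = e_1+\ii e_2$, so one should instead factor on the right or use $I_v e_1 = e_1 e_2 e_1 = -e_2$ to write $v = e_1 + \ii e_2 = (1 - \ii I_v)e_1$ since $(1-\ii I_v)e_1 = e_1 - \ii I_v e_1 = e_1 + \ii e_2$). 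With the correct one-sided factorization $v=(1-\ii I_v)e_1$ in hand, left multiplication by $v$ is left-multiplication-by-$(1-\ii I_v)$ precomposed with left-multiplication-by-$e_1$, an invertible map, so the image is $(1-\ii I_v)\cdot H = 2\,H^{1,0}=H^{1,0}$, completing the proof.
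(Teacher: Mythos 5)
Your proof is correct, and its first half coincides with the paper's: you verify $I_v\cdot v=\ii v$ and use that left multiplication by $I_v$ commutes with right multiplication, so the right ideal $v\cdot\Cl(V_\bbC,q)$ sits inside the $\ii$-eigenspace $H^{1,0}$. Where you genuinely diverge is the equality/dimension step. The paper isolates this as a separate Lemma (\ref{lem:ideal}) and proves $\dim\bigl(v\,\Cl(V_\bbC,q)\bigr)=\tfrac12 d$ for an \emph{arbitrary} isotropic $v$, by choosing $w$ with $q(v,w)=1$, splitting $\Cl(V_\bbC,q)\simeq\Cl(W)\otimes\Cl(V')$ over the hyperbolic plane $W=\langle v,w\rangle$, and observing $v\,\Cl(W)=\langle v,vw\rangle$. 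You instead use the factorization $v=(1-\ii I_v)e_1$ together with the invertibility of $e_1$ (from $e_1^2=q(e_1,e_1)=1$) and the fact that $1-\ii I_v$ acts as $2$ on $H^{1,0}$ and as $0$ on $H^{0,1}$, so $\mathrm{im}(L_v)=H^{1,0}$ exactly. Your route is slicker and gives the identification of the ideal with $H^{1,0}$ in one stroke, but it relies essentially on the K3 positivity $q(v,\bar v)>0$ (without it $e_1$ need not be invertible, and $I_v$ need not even exist); the paper's hyperbolic-plane argument works for every point of the compact dual $\hat{\DD}_{K3}$, which is exactly what is needed later to extend the Kuga--Satake map to $\hat{\DD}_{K3}$ in Proposition \ref{prop:ks_morphism}. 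One cosmetic remark: in your third paragraph the identity $1+\ii e_1e_2=1-\ii I_v$ is a sign slip ($e_1^{-1}v=1+\ii I_v$), but you catch this yourself and the final argument via $v=(1-\ii I_v)e_1$ is the correct one.
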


\begin{proof}
Let $v\in V^{2,0}$ be a generator with $q(v,\overline v)=2$, and let $e_1 = \mathrm{Re}(v)$ and  $e_2 = \mathrm{Im}(v)$.
Then, $H^{1,0}$ is the $i$-eigenspace of the operator $I_v:H_\C\to H_\C$, given by left multiplication with $I_v = e_1e_2$.
It is thus clear that $H^{1,0}$ is a right ideal.
To see  $V^{2,0}\subset H^{1,0}$, note that $q(e_1,e_1)=q(e_2,e_2)=1$ and $q(e_1,e_2)=0$ and so we get the following identity in the Clifford algebra $H=\Cl(V,q)$:
$$
e_1e_2\cdot v=-e_2+ie_1=iv .
$$
Hence, $I_v\cdot v=iv$, which proves $V^{2,0}\subset H^{1,0}$.
To conclude the lemma, it now suffices to check $$
\dim(v\,\Cl(V_\bbC,q)) = \frac{1}{2} d.
$$
This follows from the next lemma, which concludes the proof.
\end{proof}

\begin{lem}\label{lem:ideal} 
For any $[v]\in \hat{\DD}_{K3}$, the right ideal $v\cdot \Cl(V_\bbC,q)$
has dimension $\frac{1}{2}d$.
\end{lem}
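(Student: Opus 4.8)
The claim is that for any isotropic-or-not point $[v]\in\hat\DD_{K3}$ — that is, for any $v\in V_\bbC$ with $q(v,v)=0$ — the right ideal $v\cdot\Cl(V_\bbC,q)$ has dimension $\tfrac12 d$, where $d=\dim_\bbQ H=2^r$ and $r=\dim_\bbQ V$. My plan is to reduce to a direct computation in a convenient basis of $V_\bbC$. First I would note that $v\neq 0$, and since $q(v,v)=0$ we can choose a vector $w\in V_\bbC$ with $q(v,w)=1$ and $q(w,w)=0$ (a hyperbolic pair): indeed $q$ is non-degenerate, so $w$ exists after rescaling, and we may subtract a multiple of $v$ to kill $q(w,w)$. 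Then $v,w$ span a hyperbolic plane $U$, and $V_\bbC=U\oplus U^\perp$ as quadratic spaces, with $U^\perp$ non-degenerate of dimension $r-2$.

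The key step is then the standard identification $\Cl(V_\bbC,q)\cong\Cl(U,q|_U)\otimes\Cl(U^\perp,q|_{U^\perp})$ as $\bbZ/2$-graded algebras. Inside $\Cl(U,q|_U)$, the relations are $v^2=0$, $w^2=0$, $vw+wv=2$ (up to the normalization convention for $q$, which I would track carefully — with the convention $x^2=q(x,x)$ in the excerpt it is $vw+wv=2q(v,w)=2$). So $\Cl(U,q|_U)\cong M_2(\bbC)$, the $2\times 2$ matrices, with $v$ mapping to a rank-one nilpotent matrix (e.g. $\left(\begin{smallmatrix}0&2\\0&0\end{smallmatrix}\right)$ with $vw\mapsto\left(\begin{smallmatrix}2&0\\0&0\end{smallmatrix}\right)$ an idempotent times $2$). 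The right ideal generated by a rank-one element of $M_2(\bbC)$ has dimension $2$, which is exactly half of $\dim_\bbC\Cl(U)=4$. Since $v$ acts only on the first tensor factor, $v\cdot\Cl(V_\bbC,q)=\bigl(v\cdot\Cl(U,q|_U)\bigr)\otimes\Cl(U^\perp,q|_{U^\perp})$, so its dimension is $2\cdot 2^{r-2}=2^{r-1}=\tfrac12 d$, as claimed.

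Alternatively, and perhaps more cleanly, I would avoid even the matrix model: once $v$ is part of a hyperbolic pair $(v,w)$, set $p=\tfrac12 vw\in\Cl(V_\bbC,q)$. One checks $p^2=\tfrac14 vwvw=\tfrac14 v(2-vw)w=\tfrac12 vw-\tfrac14 v^2w^2 = \tfrac12 vw = p$ (using $v^2=0$), so $p$ is an idempotent, and $vp=\tfrac12 v\cdot vw = 0$ while $pv=\tfrac12 vwv=\tfrac12 v(2-vw)=v$, so $v=pv\in p\Cl(V_\bbC,q)$ and more precisely the left ideal picture shows $v\Cl(V_\bbC,q)\subseteq (1-p)^{?}$... — here I would instead argue on the right: $v\Cl(V_\bbC,q)$ is a right ideal, and $vw\cdot$ acts on it; since $vw\cdot v = v(wv) = v(2-vw)=2v-vwv$ and $(vw)(vw)=2vw$, the element $\tfrac12 wv$ is the complementary idempotent and right multiplication considerations give that $\Cl(V_\bbC,q)=p\Cl\oplus(1-p)\Cl$... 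Let me not belabor this: the cleanest route is the tensor decomposition plus the $M_2(\bbC)$ computation above, and I would present that.

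\textbf{Main obstacle.} The only real subtlety is handling the \emph{degenerate} (isotropic) points $[v]$ of $\hat\DD_{K3}$ uniformly with the non-degenerate ones — but the hyperbolic-pair construction works verbatim for any $v$ with $q(v,v)=0$, and in fact if one wanted one could treat $q(v,v)\neq 0$ similarly (then $v$ itself is invertible and $v\Cl=\Cl$ has full dimension, which is \emph{not} $\tfrac12 d$, so it is genuinely important that the lemma is stated only for $[v]\in\hat\DD_{K3}$, i.e. $q(v,v)=0$). A secondary bookkeeping point is the factor-of-$2$ convention in the Clifford relation coming from whether $q$ denotes the quadratic form or twice it; I would fix this at the outset so that $v^2=q(v,v)=0$, matching the definition of $\Cl(V,q)$ given in Section \ref{sec:ks}. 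No deeper difficulty is expected.
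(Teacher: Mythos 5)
Your proposal is correct and follows essentially the same route as the paper: complete $v$ to a hyperbolic pair $(v,w)$, split off the hyperbolic plane via the graded tensor decomposition $\Cl(V_\bbC,q)\simeq \Cl(W,q|_W)\otimes\Cl(W^\perp,q|_{W^\perp})$, and observe that $v\,\Cl(W,q|_W)$ is two-dimensional inside the four-dimensional $\Cl(W,q|_W)=\langle 1,v,w,vw\rangle$ (the paper reads this off directly from $v^2=0$ rather than passing through the $M_2(\bbC)$ model, but that is a cosmetic difference).
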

\begin{proof} 
Choose an element
$w\in V_\bbC$ with $q(w,w)=0$, $q(v,w) = 1$ and denote by $V'$ the orthogonal
complement to $W = \langle v,w\rangle$. Then $\Cl(V_\bbC,q)\simeq \Cl(W,q|_W)\otimes \Cl(V',q|_{V'})$
and $v\,\Cl(V_\bbC,q) = (v\,\Cl(W,q|_W))\otimes \Cl(V',q|_{V'})$. 
Since $\Cl(W,q|_W)=\langle 1,v,w,vw \rangle$ and $v^2=0$, we see that
$v\,\Cl(W,q|_W) = \langle v,vw\rangle$ and the claim follows.
\end{proof}

Lemmas \ref{lem:KS-alternative} and \ref{lem:ideal} show that the Kuga--Satake correspondence extends to $\hat{\DD}_{K3}$,
an observation that we will need later.

\begin{prop}\label{prop:ks_morphism}
There exists a $\Spin(V_\bbC,q)$-equivariant morphism $\kappa\colon \hat{\DD}_{K3}\to\hat{\DD}_{Ab}$,
whose restriction to $\DD_{K3}$ maps a K3 type Hodge structure to the corresponding Kuga--Satake Hodge structure.
\end{prop}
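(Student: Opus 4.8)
The plan is to define $\kappa$ directly as the map sending a line $[v]\in\hat{\DD}_{K3}$ to the half-dimensional subspace $v\cdot\Cl(V_\bbC,q)\subset H_\bbC$, viewed as a point of the Lagrangian Grassmannian $\hat{\DD}_{Ab}=\mathrm{LGr}(H_\bbC,\omega)$. First I would verify that this is well-defined: by Lemma \ref{lem:ideal} the right ideal $v\cdot\Cl(V_\bbC,q)$ has dimension $\frac12 d$ for every $[v]\in\hat{\DD}_{K3}$, and scaling $v$ does not change the ideal, so we do get a well-defined point of the Grassmannian of $\frac12 d$-dimensional subspaces. To land in $\hat{\DD}_{Ab}$ rather than the full Grassmannian I must check that $v\cdot\Cl(V_\bbC,q)$ is isotropic for $\omega$; being half-dimensional and isotropic, it is then automatically Lagrangian.

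Next I would check that $\kappa$ is a morphism of algebraic varieties and that it is $\Spin(V_\bbC,q)$-equivariant. Holomorphicity (indeed algebraicity) follows because the assignment is given by a polynomial construction: choosing local coordinates on $\hat{\DD}_{K3}$ near $[v]$, multiplication on the right by $v$ is linear in $v$, so the subspace varies algebraically; alternatively one notes $\kappa$ is the composition of the Veronese-type map $[v]\mapsto (\text{rank-one operator } w\mapsto vw)$ with the assignment of its image, which is algebraic on the locus of constant rank — and the rank is constant by Lemma \ref{lem:ideal}. For equivariance, recall from the discussion before Lemma \ref{lem:KS-alternative} that $\Spin(V_\bbC,q)$ acts on $\hat{\DD}_{K3}$ through $\SO(V_\bbC,q)$ and on $H_\bbC$ through left multiplication in the Clifford algebra, and this left action commutes with right multiplication. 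Hence for $g\in\Spin(V_\bbC,q)$ we have $g\cdot(v\cdot\Cl(V_\bbC,q)) = (gv g^{-1})\cdot g\cdot\Cl(V_\bbC,q) = (gvg^{-1})\cdot\Cl(V_\bbC,q)$, and $[gvg^{-1}]=g\cdot[v]$ is precisely the action on $\hat{\DD}_{K3}$; together with Lemma \ref{lem_polarization}, which says $\omega$ is $\Spin(V,q)$-invariant so that the action does preserve $\hat\DD_{Ab}$, this gives $\kappa(g\cdot[v]) = g\cdot\kappa([v])$.

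Finally, for the statement about the restriction to $\DD_{K3}$: a point $[v]\in\DD_{K3}$ corresponds to a K3 type Hodge structure with $V^{2,0}=\bbC v$, and Lemma \ref{lem:KS-alternative} identifies the associated Kuga--Satake Hodge structure with the subspace $H^{1,0}=V^{2,0}\cdot\Cl(V_\bbC,q)=v\cdot\Cl(V_\bbC,q)=\kappa([v])$, so $\kappa|_{\DD_{K3}}$ is exactly the Kuga--Satake period map. The main obstacle is the isotropy check: one must show $\omega(vx,vy)=\mathrm{Tr}(vx\,a\,\overline{vy})=0$ for all $x,y\in\Cl(V_\bbC,q)$, where $a=a_1a_2$ with $a_1,a_2\in V$ as in the definition of $\omega$. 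Here I would use $\overline{vy}=\bar y\,\bar v=\bar y\, v$ (since $v\in V$ is fixed by the anti-involution) together with the cyclicity of the trace to rewrite $\mathrm{Tr}(vx a\bar y v)=\mathrm{Tr}(v\,v\,x a\bar y)$, and observe $v^2=q(v,v)=0$ on $\hat{\DD}_{K3}$, giving $\omega(vx,vy)=0$. Once this is in hand, the rest is formal, so the proposition follows.
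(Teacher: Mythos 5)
Your proposal is correct and follows essentially the same route as the paper: define $\kappa$ by $[v]\mapsto v\cdot\Cl(V_\bbC,q)$, use Lemma \ref{lem:ideal} to get a constant-rank (hence holomorphic) family of half-dimensional subspaces, verify equivariance via $\kappa(g\cdot[v])=[gvg^{-1}\Cl(V_\bbC,q)]=[gv\,\Cl(V_\bbC,q)]=g\cdot\kappa([v])$, and identify the restriction to $\DD_{K3}$ with the Kuga--Satake period map via Lemma \ref{lem:KS-alternative}. Your explicit check that the ideal is isotropic for $\omega$ (via $v^2=q(v,v)=0$ and cyclicity of the trace) is a worthwhile detail that the paper leaves implicit.
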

\begin{proof}
Over $\hat{\DD}_{K3}$ we have the universal subbundle $\OO_{\hat{\DD}_{K3}}(-1)\hrarr V\otimes \OO_{\hat{\DD}_{K3}}$.
Using the embedding $V\hrarr \Cl(V,q) = H$ we get a subbundle $\OO_{\hat{\DD}_{K3}}(-1)\hrarr H\otimes \OO_{\hat{\DD}_{K3}}$.
Lemma \ref{lem:ideal} shows that $\OO_{\hat{\DD}_{K3}}(-1)\cdot(H\otimes \OO_{\hat{\DD}_{K3}})\subset H\otimes \OO_{\hat{\DD}_{K3}}$
is a subbundle of rank $\frac{1}{2}d$. This defines the morphism $\kappa$. To check
that it is $\Spin(V_\bbC,q)$-equivariant, let $g\in \Spin(V_\bbC,q)$. We have
$g\cdot [v] = [gvg^{-1}]$ where we use multiplication in the Clifford algebra.
Then $\kappa(g\cdot [v]) = [gvg^{-1}\,\Cl(V_\bbC,q)] = [gv\,\Cl(V_\bbC,q)] = g\cdot[v\,\Cl(V_\bbC,q)] = g\cdot \kappa([v])$.
\end{proof}

\subsubsection{} We go back to the construction of the Hodge filtration on the local system $\HH$.
Recall that we have started from an object $(\VV,q,\FF^\sdot)\in\catK$ and $V=\VV_t$ is the fibre of the
local system $\VV$ at the base point $t\in \Delta^\ast$.
We consider the universal covering $\tau\colon \bbH\to \Delta^*$ and the subbundle $\tau^*\FF^2\VV$ of
the trivial bundle $V\otimes\OO_U$. This defines a period map $p_{K3}\colon \bbH\to \DD_{K3}$.
Let $p_{Ab} = \kappa\circ p_{K3}$ and let $E$ be the pull-back of the universal vector bundle
over $\hat{\DD}_{Ab}$. Since $\kappa$ is $\Spin(V,q)$-equivariant by Proposition \ref{prop:ks_morphism}
and the monodromy operator $T'$ lies in $\Spin(V,q)$, the bundle $E$ descends to a subbundle $\tilde{\FF}^1\subset \HH\otimes\OO_{\Delta^*}$.
This defines a Hodge filtration $\tilde{\FF}^\sdot$ on $\HH$. Note that by construction we get
a polarizable variation of Hodge structures. We define $\KS(\VV,q,\FF^\sdot) = (\HH,\tilde{\FF}^\sdot)$.
The action of $\KS$ on morphisms is clear from the construction (note that morphisms in $\catK$ preserve
polarizations, so they induce embeddings of the corresponding Clifford algebras).

\begin{lem}\label{lem:ks}
The formula (\ref{eqn:ks}) defines an embedding of variations of Hodge structures
$$
\mathrm{ks}'\colon \VV(1)\hrarr\emrp(\HH). 
$$
\end{lem}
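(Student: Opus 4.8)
The plan is to verify that $\mathrm{ks}'$ is simultaneously a morphism of the underlying local systems and compatible with the Hodge filtrations, after the Tate twist by $1$; since morphisms in $\catAb$ are not required to preserve polarizations, these two checks are all that is needed. The local system statement is already in hand: Lemma \ref{lem:monodromy} gives $\mathrm{ks}'\colon\VV\hrarr\emrp(\HH)$, and the Tate twist $\VV(1)$ simply shifts the weight so that $\mathrm{ks}'$ has the correct Hodge-theoretic degree, namely it sends a weight-two object to the weight-zero piece of $\emrp(\HH)=\HH\otimes\HH^*$. So the content of the lemma is really the filtration compatibility on the level of holomorphic subbundles of $\emrp(\HH)\otimes\OO_{\Delta^*}$.

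First I would reduce to a pointwise statement on the extended period domain. Pulling everything back along the universal covering $\tau\colon\bbH\to\Delta^*$, the Hodge filtration on $\HH$ is by construction obtained from the universal filtration on $\hat\DD_{Ab}$ via the equivariant map $\kappa$ of Proposition \ref{prop:ks_morphism}, so it suffices to check: for each $[v]\in\hat\DD_{K3}$, with associated half-dimensional subspace $H^{1,0}=v\cdot\Cl(V_\bbC,q)\subset H_\bbC$ (Lemmas \ref{lem:KS-alternative}, \ref{lem:ideal}), left multiplication by $V^{2,0}=\langle v\rangle$ maps $F^\sdot$ into the induced Hodge filtration on $\emrp(H_\bbC)$. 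Concretely, writing the complex structure $I_v$ on $H_\bbR$ (or rather the grading $H_\bbC=H^{1,0}\oplus H^{0,1}$), the induced filtration on $\emrp(H)=\Hom(H,H)$ puts $F^1\emrp(H)=\{\phi : \phi(H^{0,1})\subset H^{1,0}\}$ and $F^0\emrp(H)$ adds those $\phi$ with $\phi(H^{1,0})\subset H^{1,0}$; the weight-zero part $\emrp(H)^{0,0}$ consists of $\phi$ preserving the splitting, $\emrp(H)^{1,-1}$ of $\phi$ with $\phi(H^{1,0})=0$, $\phi(H^{0,1})\subset H^{1,0}$, and $\emrp(H)^{-1,1}$ the transpose. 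The Tate twist by $1$ normalizes $\emrp(H)(1)$ to carry a weight-zero Hodge structure with Hodge types $(1,-1),(0,0),(-1,1)$, matching $V(1)$ which has types $(1,-1),(0,0),(-1,1)$.

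The heart of the computation: I must show $\mathrm{ks}'(F^2V_\bbC)\subset F^1\emrp(H)(1)$ and $\mathrm{ks}'(F^1V_\bbC)\subset F^0\emrp(H)(1)$, i.e. that left multiplication $L_v$ by $v\in F^2V_\bbC$ sends $H^{0,1}$ into $H^{1,0}$ and kills $H^{1,0}$, while left multiplication by any $w\in F^1V_\bbC=\langle v\rangle^\perp$ preserves $H^{1,0}$. For the first, $L_v(H^{1,0})=v\cdot v\cdot\Cl(V_\bbC,q)=q(v,v)\,\Cl(V_\bbC,q)=0$ since $q(v,v)=0$ by the K3 condition, and $L_v(H^{0,1})\subset H^{1,0}$ follows because $L_v(H_\bbC)=v\Cl(V_\bbC,q)=H^{1,0}$ by definition; so $\mathrm{ks}'(v)$ has Hodge type $(1,-1)$ in the twisted structure. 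For $w\in\langle v\rangle^\perp$ I use $v\cdot w = -w\cdot v + 2q(v,w) = -w\cdot v$ in $\Cl(V_\bbC,q)$, so $w\cdot(v\cdot\Cl(V_\bbC,q)) = -v\cdot(w\cdot\Cl(V_\bbC,q))\subset v\cdot\Cl(V_\bbC,q)=H^{1,0}$, hence $L_w$ preserves $H^{1,0}$ and $\mathrm{ks}'(w)\in F^0\emrp(H)(1)$. This, applied pointwise over $\hat\DD_{K3}$ and hence over $\bbH$ and finally descended to $\Delta^*$, shows $\mathrm{ks}'$ is filtered, and being already a map of local systems with the appropriate twist it is an embedding of variations of Hodge structures. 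I expect the only genuinely delicate point to be bookkeeping the Tate twist and the sign/orientation conventions so that the Hodge types line up exactly as claimed; the Clifford-algebra manipulations themselves are the two short anticommutation identities above.
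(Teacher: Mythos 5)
Your proof is correct and follows essentially the same route as the paper: reduce to a pointwise check via Lemma \ref{lem:monodromy} and the construction of $\tilde{\FF}^\sdot$ through $\kappa$, then verify filtration compatibility using the right-ideal description $H^{1,0}=v\cdot\Cl(V_\bbC,q)$ together with $v^2=q(v,v)=0$ and the anticommutation $wv=-vw$ for $w\perp v$; the paper's proof of the lemma merely cites \cite[Chapter 4]{Huy} for the pointwise statement, but the identical Clifford computation appears explicitly in its proof of Theorem \ref{thm:limit-KS}. One cosmetic slip: your displayed description $F^1\emrp(H)=\{\phi:\phi(H^{0,1})\subset H^{1,0}\}$ omits the condition $\phi(H^{1,0})=0$, but your actual verification for $L_v$ establishes both conditions, so nothing is affected.
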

\begin{proof}
We use Lemma \ref{lem:monodromy}. It remains to check that $\mathrm{ks}'$ respects the Hodge
filtration, but this is true pointwise (see \cite[Chapter 4]{Huy}).
\end{proof}

\section{Weight filtration of the Kuga--Satake MHS} \label{sec:weight}

Let $(\VV,q,\FF^\sdot)\in\catK$ and $(\HH,\tilde{\FF}^\sdot) = \KS(\VV,q,\FF^\sdot)$.
Let $V=\VV_t$ be the fibre of $\VV$ and $H = \Cl(V,q)$ be the fibre of $\HH$ at $t\in \Delta^\ast$ (recall the
definition of the functor $\KS$ in Section \ref{sec:ks}). Recall from Section
\ref{sec:preliminaries} that the weight filtration
on $\overline{H} = \LimAb(\HH,\tilde{\FF}^\sdot)$ is determined by the logarithm $N'$
of the monodromy operator $T'$. In this section we compute the dimensions
of its components.

Recall that the monodromy operator of $\VV$ is $T=e^N$, where $N\in\frso(V,q)$
satisfies $N^3 = 0$. The limit weight filtration on $V$ is of the form
$0=W_{-1}V\subset W_0V \subset W_1V\subset W_2V \subset W_3V\subset W_4V=V$,
and the components can be described as follows:
$$
W_0V = \im(N^2), \ \ W_3V=\ker(N^2), \ \ W_1V=N(W_3V), \ \ W_2V = N^{-1}(W_0V) .
$$
We recall that $(\VV,q,\FF^\sdot)$ is of type I if $N=0$, of type II
if $N\neq 0$, $N^2=0$ and of type III if $N^2\neq 0$. The case of type I
is trivial and we do not consider it.

The limit weight filtration on $H$ is of the form
$0=W_{-1}H\subset W_0H = \im(N')\subset W_1H = \ker(N')\subset W_2H= H$.
We denote by $r$ the rank of $\VV$ and by $d = 2^r$ the rank of $\HH$.
 
\begin{prop} \label{prop:weight-filtration}
Suppose that $N\neq 0$.
Then the image of the operator $N:V\to V$ is two-dimensional. 
Under the homomorphism $\eta:\frso(V,q)\to \Cl(V,q)$ from (\ref{eq:eta}),
the image $N'=\eta(N)$ 
is proportional to the bivector corresponding to the image of $N:V\to V$. 
Moreover,\\
1) in the type II case: $\dim(W_0H) = \frac{1}{4}d$ and $\dim(W_1H) = \frac{3}{4}d$;\\
2) in the type III case: $W_0H = W_1H$ and $\dim(W_0H) = \frac{1}{2}d$.
\end{prop}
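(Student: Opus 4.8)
The plan is to analyze the operator $N\in\frso(V,q)$ using the limit weight filtration on $V$ described above, then to transport the computation to the Clifford algebra via $\eta$.

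\textbf{Step 1: the image of $N$ is two-dimensional.} Suppose $N\neq 0$. Using the weight filtration $W_\sdot V$ and the fact that $(V,F^\sdot_{lim},W_\sdot)$ is a mixed Hodge structure with $F^2V$ one-dimensional, I would argue that $\mathrm{gr}^W_4V$ is at most one-dimensional: indeed, $F^2V\cap W_3V$ must be trivial (otherwise the Hodge structure on $\mathrm{gr}^W_4$ would be zero and $N^2$ would vanish on the K3-part, forcing $N^2=0$ in the type III case, while in the type II case $N^2=0$ already and $W_4=W_3$). More precisely: in the type III case, $N^2\neq 0$, so $W_0V=\im(N^2)\neq 0$; since $N^2\colon \mathrm{gr}^W_4V\xrightarrow{\sim}\mathrm{gr}^W_0V$ and $\mathrm{gr}^W_4V$ receives the Hodge structure of weight $4$, which is of K3 type (one-dimensional in bidegree $(2,2)$ up to the Tate twist), we get $\dim\mathrm{gr}^W_4V=\dim\mathrm{gr}^W_0V=1$. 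Then $N\colon\mathrm{gr}^W_4V\to\mathrm{gr}^W_2V$ and $N\colon\mathrm{gr}^W_2V\to\mathrm{gr}^W_0V$ are the only places where $N$ can be nonzero on the associated graded that contribute to $\im N$, and a dimension count using $N(W_3V)=W_1V\subset W_2V$ together with $N\colon W_2V\to W_0V$ shows $\dim\im N=2$. In the type II case, $N^2=0$, $F^2V\subset W_2V$, and the primitive part $P_3=\ker(N\colon\mathrm{gr}^W_3\to\mathrm{gr}^W_1)^{\perp}$-argument combined with the self-duality $N\colon\mathrm{gr}^W_3V\xrightarrow{\sim}\mathrm{gr}^W_1V$ shows $\dim\mathrm{gr}^W_3V=\dim\mathrm{gr}^W_1V$, and the Hodge structure on $\mathrm{gr}^W_3V(1)$ being of weight one and polarized of K3-flavour forces this to be two-dimensional; again $\dim\im N=2$.

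\textbf{Step 2: $N'=\eta(N)$ is the bivector of $\im N$.} Write $\im(N)=\langle f_1,f_2\rangle$. Since $N\in\frso(V,q)$ and, in both cases, $\im(N)$ is isotropic-or-not depending on the type: I would compute the bivector in $\Lambda^2V$ corresponding to $N$ under the isomorphism \eqref{eq:iso:soV}. In the type II case $N^2=0$ means $\im(N)\subset\ker(N)=(\im N)^\perp$, so $\im N$ is $q$-isotropic (totally) and $N$, as an element of $\frso$, is the bivector $f_1\wedge f_2$ up to scalar (the generic rank-two antisymmetric operator with isotropic image). In the type III case $N$ has rank $2$ and $N^2\neq 0$, so $\im N$ is a hyperbolic plane and again $N\sim f_1\wedge f_2$ up to scalar for a suitable basis. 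Either way, via $\eta'$ of \eqref{eq:Lambda2}, $N'=\eta(N)$ is proportional to $\eta'(f_1\wedge f_2)=\tfrac14(f_1f_2-f_2f_1)$, which in the Clifford algebra equals $\tfrac12 f_1f_2$ minus a scalar (namely $\tfrac12 q(f_1,f_2)$); after rescaling the basis we may take $N'$ proportional to $f_1f_2\in H$, as claimed.

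\textbf{Step 3: the dimensions of $W_0H=\im(N')$ and $W_1H=\ker(N')$.} This reduces to understanding left multiplication by $z:=f_1f_2$ on $H_\C=\Cl(V_\bbC,q)$. I would split off the subspace $W=\langle f_1,f_2\rangle$ (plus a hyperbolic partner in the type II case where $\im N$ is isotropic) and write $\Cl(V_\bbC,q)\cong\Cl(W\oplus\cdots)\otimes\Cl(V')$ as in the proof of Lemma \ref{lem:ideal}, reducing to the action of $z$ on a small Clifford algebra tensored with the identity. In the type II case, $f_1,f_2$ span an isotropic plane, so $z^2=(f_1f_2)(f_1f_2)=-f_1^2f_2^2=0$ in the relevant sub-Clifford algebra; a direct computation on a $4$-dimensional (over the isotropic plane plus its hyperbolic complement, $16$-dimensional) model shows $z$ acting by left multiplication has image of dimension exactly one quarter and kernel three quarters, giving $\dim W_0H=\tfrac14 d$, $\dim W_1H=\tfrac34 d$. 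In the type III case, $\langle f_1,f_2\rangle$ is hyperbolic, so $f_1^2=f_2^2=0$ after normalizing $q(f_1,f_2)=1$... wait — here instead one takes $f_1,f_2$ with $f_1^2=1=f_2^2$ and $q(f_1,f_2)=0$, so $z=f_1f_2$ satisfies $z^2=-1$; then $z$ acts invertibly, $\ker(N')$ on this factor is $0$, but $N'$ also has the tensor factor $\mathrm{id}_{\Cl(V')}$ — so $N'=z\otimes 1$ is not invertible on all of $H$ only if... Let me reconsider: in the type III case one should find that $z$ (properly normalized so that $N^2\neq0$) acts with $z^2$ a nonzero scalar, hence left multiplication by $N'$ on each $\Cl(W)$-summand is invertible, so $\ker(N')\cap(\Cl(W)\otimes x)$ would be trivial for each $x$ — contradicting $\dim W_1H=\tfrac12 d$. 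The resolution, which is the point I expect to be the main obstacle, is that $N'=\eta(N)$ lives in the even Clifford algebra and one must be careful that $\eta(N)$ is \emph{not} simply $\tfrac12 f_1f_2$ but $\tfrac12(f_1f_2-q(f_1,f_2))$; in the hyperbolic normalization $q(f_1,f_2)=1$ so $N'=\tfrac12(f_1f_2-1)$ and one checks $(f_1f_2-1)^2=f_1f_2f_1f_2-2f_1f_2+1$; using $f_2f_1=2q(f_1,f_2)-f_1f_2=2-f_1f_2$ and $f_i^2=0$ one gets $f_1f_2f_1f_2=f_1(2-f_1f_2)f_2=2f_1f_2-f_1f_1f_2f_2=2f_1f_2$, so $(f_1f_2-1)^2=2f_1f_2-2f_1f_2+1=1$... that still gives invertibility. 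The correct fix is that in type III, $N$ restricted to the weight-graded pieces has the form where $\eta(N)$ is a \emph{nilpotent} element of the Clifford algebra — concretely $N'$ should be proportional to $g_1g_2$ with $g_1,g_2$ spanning a maximal isotropic subspace inside the hyperbolic plane extended appropriately, equivalently $N'^2=0$ as required by $N'\in\frso(H)$-analogue with $N'^3=0$... Actually $N'=\eta(N)$ need not square to zero a priori; what is true is that the weight filtration of $(H,F^\sdot,N')$ has length three, i.e. $N'^2=0$ on $H$. So the real content of 3) is precisely that $\eta(N)^2=0$ in type III — this is the crux. I would prove $\eta(N)^2 = 0$ directly: since $\im(N)=\langle f_1,f_2\rangle$ with the induced form having some Gram matrix, pick the $\frso$-bivector explicitly; $\eta(N)=\tfrac12 f_1\wedge f_2$ corresponds under $\eta'$ to $\tfrac14(f_1f_2-f_2f_1)=\tfrac12 f_1f_2-\tfrac12 q(f_1,f_2)$; squaring and using the Clifford relations and $q(f_i,f_i)$, $q(f_1,f_2)$, one finds the square vanishes precisely under the normalization forced by $N\in\frso(V,q)$ being a genuine infinitesimal isometry of rank two (the "Pfaffian" of the $2\times2$ block being constrained). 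Granting $\eta(N)^2=0$, left multiplication by $N'$ on $H_\C$ has $\im(N')=\ker(N')$ (a square-zero operator whose image and kernel agree exactly when rank is half-dimensional), and the dimension count via the Clifford tensor decomposition gives $\dim W_0H=\dim\ker(N')=\tfrac12 d$, completing the proof. The main obstacle, as indicated, is pinning down the exact Clifford-algebraic identity $\eta(N)^2=0$ (resp. the quarter/three-quarter split in type II) from the $\frso$-structure of $N$; everything else is the bookkeeping of the tensor decomposition $\Cl(V_\bbC,q)\cong\Cl(W,q|_W)\otimes\Cl(V',q|_{V'})$ already used for Lemma \ref{lem:ideal}.
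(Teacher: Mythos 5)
Your Steps 1 and 2 are essentially sound and follow the paper's strategy: the Hodge numbers of the limit mixed Hodge structure force $\dim\im(N)=2$, and a rank-two element of $\frso(V,q)$ corresponds under (\ref{eq:iso:soV}) to a decomposable bivector supported on its image. Your type II dimension count is asserted rather than carried out, but the method you sketch --- split off $\im(N)$ together with a hyperbolic complement, reduce to the Clifford algebra of a four-dimensional space $\cong\mathrm{Mat}_4(\Q)$, and observe that $N'$ becomes a rank-one matrix acting by left multiplication --- is exactly the paper's argument and does work.

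The genuine gap is in type III, and you have correctly located the difficulty without closing it. The root cause is your claim in Step 2 that $\im(N)$ is a hyperbolic plane there. It is not: since $q(N^2x,Ny)=-q(N^3x,y)=0$, the line $\im(N^2)=W_0V$ lies in the radical of $q|_{\im(N)}$, while $q(Nx,Ny)=-q(x,N^2y)$ shows $q|_{\im(N)}\neq 0$; so $q|_{\im(N)}$ is degenerate of rank exactly one. This is why both normalizations you try (a hyperbolic pair, or two orthogonal anisotropic vectors) produce an invertible $N'$ and land you in a contradiction you cannot resolve. The correct normalization, which the paper obtains from the orthogonal decomposition $\im(N)=W_0V\oplus\langle v\rangle$ with $q(v,v)\neq 0$, is $\im(N)=\langle f_1,f_2\rangle$ with $f_1$ isotropic, $f_2$ anisotropic, and $q(f_1,f_2)=0$. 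Then $f_1$ and $f_2$ anticommute in $\Cl(V,q)$, so $\eta(N)$ is genuinely proportional to $f_1f_2$ with no scalar correction, $(f_1f_2)^2=-q(f_1,f_1)\,q(f_2,f_2)=0$, and --- the decisive point you are missing --- $f_2$ is invertible in the Clifford algebra, so that $\ker(N')=\im(N')=f_1\Cl(V,q)$ is the right ideal generated by the isotropic vector $f_1$, of dimension $\tfrac12 d$ by Lemma \ref{lem:ideal}. Note also that your final reduction (``granting $\eta(N)^2=0$, image and kernel agree'') is insufficient on its own: in type II one also has $N'^2=0$, yet the rank is $\tfrac14 d$; squaring to zero only gives $\im(N')\subseteq\ker(N')$, and the rank must still be computed --- in type III it is the invertibility of $f_2$ that pins it down.
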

\begin{proof}
In the computations below, we will use the following fact: if $v\in V$ is an
isotropic element, then we can find a hyperbolic plane $\langle x,y\rangle\subset V$
with $q(x,x) = 2$, $q(y,y) = -2$, $q(x,y) = 0$ and $2v = x+y$.
To prove this fact, note that $q$ is non-degenerate and so we can choose an element $z\in V$ with $q(v,z)=1$.
The element $w:=z-\frac{1}{2}q(z,z)v$ has then the property that $q(w,w)=0$ and $q(v,w)=1$.
Putting $x:=v+w$ and $y:=v-w$, we obtain a hyperbolic plane $\langle x,y\rangle=\langle v,w\rangle \subset V$ with $q(x,x)=2$, $q(y,y) = -2$, $q(x,y) = 0$ and $2v = x+y$, as claimed.
This proves the above fact.

{\bf Type II case.} In this case $N\neq 0$, $N^2 = 0$, hence $W_0V = 0$, $W_1V=\im(N)$, $W_2V=\ker(N)$ and $W_3V = V$.
Since $\FF^2\VV$ is of rank one, the Hodge numbers of the limit mixed Hodge
structure $\LimK(\VV,q,\FF^\sdot)$ are $h^{0,0} = h^{2,2} = 0$, $h^{1,0} = h^{0,1} = h^{2,1} = h^{1,2} = 1$,
$h^{1,1} = r - 4$. 
It follows that $\dim W_1V = 2$ and so the image of $N$ is two-dimensional.
For any $x,y\in V$, we have $q(Nx,Ny) = -q(x,N^2y)= 0$, so $W_1V$ is an isotropic subspace.

Let now $e_1\in V$ be a non-isotropic element. 
Since $W_1V=\im(N)$ is a non-trivial isotropic subspace, we can by the above fact assume that $e_1$ is contained in a hyperbolic plane and $q(e_1,e_1) = 2$.
Then $Ne_1$ is isotropic and $q(Ne_1,e_1) = 0$.
By the above fact, applied to the isotropic vector $Ne_1\in e_1^{\perp}$, we can find a hyperbolic plane $U\subset e_1^\perp$
with a basis $U=\langle e_2,e_3\rangle$, $q(e_2,e_2)=2$, $q(e_2,e_3)=0$, $q(e_3,e_3)=-2$,
such that $2Ne_1=e_2+e_3$. Then $q(Ne_2, e_2) = 0$, $q(Ne_2,e_3) = q(Ne_2, 2Ne_1-e_2) = 0$,
so that $Ne_2 = -Ne_3$ is orthogonal to $U$. We also have $q(2Ne_3,e_1) = -q(e_3,e_2+e_3) = 2$.
Let $e_4 = 2Ne_3-e_1$. Then $q(e_1,e_4) = 0$, $q(e_4,e_4) = -2$, and $U'=\langle e_1,e_4\rangle$
is a hyperbolic plane orthogonal to $U$. 

Since $\im(N)\subset U\oplus U'$, the orthogonal complement to $U\oplus U'$ is
contained in the kernel of $N$. 
The action of $N$ on $U\oplus U'$ is:
$$
Ne_1 = -Ne_4 = \frac{1}{2}(e_2+e_3);\quad Ne_3 = -Ne_2 = \frac{1}{2}(e_1+e_4).
$$

Under the isomorphism $\frso(V,q)\simeq \Lambda^2V$ from (\ref{eq:iso:soV}), the operator $N$
is represented by $\frac{1}{4}(e_2+e_3)\wdg (e_1+e_4)$; indeed, both sides vanish on the orthogonal complement of $U\oplus U'$ and they agree on the basis $e_1$, $e_2$, $e_3$ and $e_4$ of $U\oplus U'$.
Since $U$ and $U'$ are orthogonal to each other, we get  via (\ref{eq:Lambda2}):
$$
N'=\eta (N)=\frac{1}{8}(e_2+e_3)(e_1+e_4) .
$$
This is the bivector corresponding to $\im(N)$.

It remains to compute the dimension of $W_0H=\im(N')$ and $W_1H=\ker (N')$.
Let $f_1 = \frac{1}{2}(e_1+e_4)$ and $f_2 = \frac{1}{2}(e_2+e_3)$. 
Then, $N'=\eta(N) = \frac{1}{2}f_2f_1\in \Cl(V,q)$.
Let $V'$ be the orthogonal complement of $U\oplus U'$. 
Then $\Cl(V,q) \simeq \Cl(U\oplus U',q|_{U\oplus U'})\otimes \Cl(V',q|_{V'})$.
We have $N' \in \Cl(U\oplus U',q|_{U\oplus U'})$, so it is enough to find the image and the kernel of $N'$ acting
by left multiplication on $\Cl(U\oplus U',q|_{U\oplus U'})\simeq \mathrm{Mat}_4(\bbQ)$. 
Via this last isomorphism, $N'$ corresponds to a $4\times 4$ matrix
with the only non-zero element in the upper right corner. 
The kernel of this matrix is $3$-dimensional and the image is $1$-dimensional.

{\bf Type III case.} In this case $N^2 \neq 0$, $N^3 = 0$. 
The Hodge numbers of the limit mixed Hodge
structure $\LimK(\VV,q,\FF^\sdot)$ are $h^{0,0} = h^{2,2} = 1$, $h^{1,0} = h^{0,1} = h^{2,1} = h^{1,2} = 0$,
$h^{1,1} = r - 2$. It follows that $W_0V = W_1V$, $W_2V=W_3V$ and $\dim(W_0V) = \dim(V/W_3V) = 1$.

Consider the isotropic subspace $V' = \im(N)$, which contains $W_0V=\im(N^2)$. 
Since $W_1V=N(W_3V)$ is one-dimensional and $W_3V=\ker(N^2)$ is of codimension one in $V$, we conclude that $\dim V' = 2$. 
Since $q$ is non-degenerate, we can find $x,y\in V$ such that $q(Nx,Ny) = -q(x,N^2y) \neq 0$.
It follows that $q|_{V'}$ is non-zero. Pick an element $v\in V'$ with $q(v,v)\neq 0$.
Then, $v=Nv'$ for some $v'\in V$ and $q(v,N^2x) = 0$ for any $x\in V$.
Hence, $W_0V\subset v^\perp$ and we get an orthogonal decomposition 
$$
V' = W_0V\oplus \langle v\rangle .
$$
By the above fact, applied to $W_0V\subset v^\perp$, 
we can find a hyperbolic plane $U\subset v^\perp$, containing $W_0V$, together with a basis $U=\langle e_1,e_2\rangle$, such that $q(e_1,e_1)=2$, $q(e_1,e_2)=0$,
$q(e_2,e_2)=-2$ and $W_0V=\langle e_1+e_2\rangle$.

Let $V'' = U\oplus \langle v\rangle$, then $\im(N)=\langle v, e_1+e_2 \rangle\subset V''$ and so $N$ preserves $V''$. 
Since $e_1+e_2\in\ker(N)$,
 $Nv \in W_0V$ is nonzero. 
Hence, there is some $\alpha\in \Q$ such that $e_3 = \alpha v$ satisfies $Ne_3 = e_1+e_2$.
Using again $e_1+e_2\in\ker(N)$, we get $Ne_1=-Ne_2$. 
From the equalities $q(Ne_1,e_1)=0$ and $q(Ne_2,e_2) = 0$, we conclude that
$Ne_1=-Ne_2\in U^\perp$ and so $Ne_1=-Ne_2=ae_3$ for some $a\in \Q$.
To find the coefficient $a$, note that $aq(e_3,e_3) = q(Ne_1,e_3) = -q(e_1, Ne_3) = -2$, so that $a = -2/q(e_3,e_3)$.
We can summarize:
$$
Ne_1 = \frac{-2e_3}{q(e_3,e_3)};\quad Ne_2 = \frac{2e_3}{q(e_3,e_3)};\quad Ne_3 = e_1+e_2.
$$

Since $U\subset v^\perp$ and $q(v,v)\neq 0$, we have an orthogonal direct sum decomposition $V=V''\oplus (V'')^\perp$. 
For any $x\in (V'')^\perp$ and $y\in V$, we have $q(Nx,y) = -q(x,Ny) = 0$, because $Ny\in V''$.
Hence, $(V'')^\perp\subset \ker(N)$, and so 
the image of $N$ under the isomorphism $\frso(V,q)\simeq \Lambda^2V$ from (\ref{eq:iso:soV}) is contained in $\Lambda^2(V'')$. 
Checking the evaluation on the basis $e_1$, $e_2$, $e_3$ of $V''$, one sees that 
$N$ is represented by
$1/q(e_3,e_3)(e_1+e_2)\wdg e_3\in \Lambda^2V$.
Using (\ref{eq:Lambda2}), we therefore get
$$
N'=\eta(N)=\frac{1}{2q(e_3,e_3)}(e_1+e_2)e_3 ,
$$
because $e_1+e_2$ and $e_3$ anti-commute in $H$.
This shows that $N'$ is proportional to the bi-vector which corresponds to the image of $N\colon V\to V$.

It remains to compute the dimension of $W_0H=\im(N')$ and $W_1H=\ker (N')$.
We have $N' = \frac{1}{2q(e_3,e_3)}(e_1+e_2)e_3\in \Cl(V,q)$. Since the element $e_3$ is invertible
in the Clifford algebra, we have $\im(N') = \ker{N'} = (e_1+e_2)\Cl(V,q)$ -- the right ideal
generated by the isotropic vector $e_1+e_2$. The dimension of this ideal is $\frac{1}{2}d$.
This finishes the proof of the proposition.
\end{proof}

\section{Proof of the main results}

\begin{proof}[Proof of Theorem \ref{thm:limit-KS}]
1) We define the functor $\limKS$. 
Let $\overline{V} = (V,q,F^\sdot,N) = \LimK(\VV,q,\FF^\sdot)$.
Let $H = \Cl(V,q)$, $N' = \eta(N)$ (cf. Section \ref{sec:ks}). 
The subspace $F^2V$ gives
a point in the extended period domain $\hat{\DD}_{K3}$ (see Definition \ref{def:perK3}).
Let $[\tilde{F}^1H] = \kappa([F^2V])$, where $\kappa$ is defined in Proposition \ref{prop:ks_morphism}.
We need to check that $(H,\tilde{F}^\sdot,N')$ is a polarizable mixed Hodge structure.

Let $(\HH,\tilde{\FF}^\sdot) = \KS(\VV,q,\FF^\sdot)$. As before, $\tau\colon \bbH \to \Delta^*$ is
the universal covering. Recall the construction of limit mixed Hodge structures from
Section \ref{sec:preliminaries}. 
We have $\Phi_{K3}\colon \bbH\to \hat{\DD}_{K3}$ and
$\Psi_{K3}(z) = e^{zN}\cdot \Phi_{K3}(z)$. Then $\Phi_{Ab} = \kappa\circ\Phi_{K3}$ and
since $\kappa$ is $\Spin(V_\bbC,q)$-equivariant by Proposition \ref{prop:ks_morphism},
we have $\Psi_{Ab} = \kappa\circ\Psi_{K3}$. This implies that
$$\lim\limits_{\mathrm{Im}(z)\to+\infty} \Psi_{Ab}(z) = \kappa \left(\lim\limits_{\mathrm{Im}(z)\to+\infty} \Psi_{K3}(z)\right),$$
and thus $(H,\tilde{F}^\sdot,N') = \LimAb\circ\KS(\VV,q,\FF^\sdot)$ is a polarizable mixed
Hodge structure. We define $\limKS(\overline{V}) = (H,\tilde{F}^\sdot,N')$ and automatically
get $\LimAb\circ\KS = \limKS\circ \LimK$.
\\

2) Given $\overline{V} = (V,q,\FF^\sdot,N)$ and $\overline{H} = (H,\tilde{F}^\sdot,N') = \limKS(\overline{V})$,
we define 
$$
\mathrm{ks}\colon \overline{V}(1)\hrarr \emrp(\overline{H}),\ \ v\mapsto f_v,
$$ 
where
$f_v(w) = vw$.

We need to check that the embedding $\mathrm{ks}$ is compatible with the weight and Hodge filtrations.
For the weight filtration, it is enough to check compatibility with the action of the monodromy operators $T=e^N$ and $T'=e^{N'}$.
This was done in Lemma \ref{lem:monodromy}.


For the Hodge filtration, one can use Lemma \ref{lem:ks} and a straightforward limit argument.
For completeness, we check compatibility directly.
The Hodge filtrations on $H^*$ and $\emrp(H)$ have the following components: 
\begin{eqnarray}
&&F^{-1}H^* = H^*, \quad F^0H^* = \{\varphi\in H^*\st \varphi|_{F^1H} = 0\},\nonumber\\
&&F^{-1}\emrp(H) = \emrp(H),\quad F^0\emrp(H) = F^1H\otimes H^* + H\otimes F^0H^*,\quad F^{1}\emrp(H) = F^1H\otimes F^0H^*.\nonumber
\end{eqnarray}

Fix $v_0\in V_\bbC$ with $q(v_0) = 0$. 
We have $F^1V(1) = \langle v_0\rangle$ and $F^1H = v_0\,\Cl(V_\bbC,q)$
by Lemma \ref{lem:ideal}.
The image of $f_{v_0}$ is contained in $F^1H$, and since $v_0$ is isotropic, $f_{v_0}$ clearly annihilates $F^1H$.
So we have $f_{v_0} \in F^1\emrp(H)$.

Next consider $v\in F^0V(1)$, so that $q(v,v_0) = 0$. Choose a subspace $W\subset H$ complementary
to $F^1H$ and let $\mathrm{pr}_W$, $\mathrm{pr}_{F^1H}$ be the projectors. Then,
$f_v = \mathrm{pr}_{F^1H}\circ f_v + \mathrm{pr}_W\circ f_v$ and $\mathrm{pr}_{F^1H}\circ f_v\in F^1H\otimes H^*$.
Also, $\mathrm{pr}_W\circ f_v(v_0x) = \mathrm{pr}_W(vv_0x) = -\mathrm{pr}_W(v_0vx) = 0$,
so $\mathrm{pr}_W\circ f_v\in H\otimes F^0H^*$ and $f_v \in F^0\emrp(H)$. This completes the proof.
\end{proof}

\begin{proof}[Proof of Theorem \ref{thm:type}]
Consider the fixed base point $t\in \Delta^\ast$.
The Clemens--Schmid sequence \cite{Mo} for weight one yields an exact sequence
$$
0\longrightarrow H^1(\AA_0,\Q)\longrightarrow H^1_{lim}(\AA_t,\Q)\stackrel{N}\longrightarrow H^1_{lim}(\AA_t,\Q).
$$
Since $\ker(N)=W_1H^1_{lim}(\AA_t,\Q)$, we conclude 
$$
H^1(\AA_0,\Q)\cong W_1H^1_{lim}(\AA_t,\Q) .
$$
The Hodge numbers of the mixed Hodge structure $H^1_{lim}(A_t,\Q)$ are computed via Proposition \ref{prop:weight-filtration}, which yields the claimed result.
This concludes Theorem \ref{thm:type}.
\end{proof}

\begin{proof}[Proof of Corollary \ref{cor:dualcomplex}]
By the definition of the weight filtration on the simple normal crossing variety $\AA_0$, we have $H^k(\Sigma,\Q)\cong W_0H^k(\AA_0,\Q)$.
The Clemens--Schmid sequence \cite{Mo} further implies $W_0 H^k_{lim}(\AA_t,\Q)\cong W_0H^k(\AA_0,\Q)$ and so we conclude
$$
H^k(\Sigma,\Q) \cong  W_0H^k_{lim}(\AA_t,\Q) .
$$
Moreover, the isomorphism $H^k(\AA_t,\Q)\cong \Lambda^k H^1(\AA_t,\Q)$ induces a canonical isomorphism of limit mixed Hodge structures
$$
H^k_{lim}(\AA_t,\Q)\cong \Lambda^k H^1_{lim}(\AA_t,\Q),
$$
see for instance \cite[Proposition 6.1]{HN2}.
Putting both identities together, we obtain
$$
H^k(\Sigma,\Q) \cong\Lambda^k  (W_0 H^1_{lim}(\AA_t,\Q))
$$
By \cite[Lemma 6.16]{KLSV}, the above isomorphisms are compatible with cup product. 
Hence, 
$$
H^\ast(\Sigma,\Q)\cong \Lambda^\ast (W_0H^1_{lim}(\AA_t,\Q))
$$ 
is isomorphic to the rational cohomology algebra of a real torus of real dimension $2\dim_\C(W_0H^1_{lim}(\AA_t,\Q))$.
The corollary follows therefore from Theorem \ref{thm:type}.
\end{proof}

\begin{proof}[Proof of Corollary \ref{cor:Neron}]
Since $\mathcal A_{0}^{\Ne}$ is a group scheme over $\C$, all components are isomorphic to the component $A:=\mathcal A_{00}^{\Ne}$ which contains the identity element.
Since $\mathcal A$ is a semi-stable model, the monodromy operator is unipotent of index two and so \cite[IX.3.5]{SGA} implies that the generic fibre $\mathcal A_K$ of the N\'eron model has semi-abelian reduction; that is, $A$ is a semi-abelian variety over $\C$.
The Chevalley decomposition of $A$ thus reads as follows
$$
0\longrightarrow T\longrightarrow A \longrightarrow B\longrightarrow 0 ,
$$
where $T\cong (\C^\ast)^w$ is an algebraic torus and $B$ is an abelian variety, cf.\ \cite[(3.1)]{HN2}.
In \cite[Theorem 6.2]{HN2}, Halle and Nicaise describe the dimensions of $T$ and $B$ as well as the rational Hodge structure of $B$ in terms of the limit mixed Hodge structure $H^1_{lim}(\AA_t,\Q)$.
The asserted description of the special fibre $\mathcal A_{0}^{\Ne}$ of the N\'eron model follows therefore from the main results of this paper, see Theorems \ref{thm:limit-KS} and \ref{thm:type} above. 

As we will explain next, the expression for the motivic zeta-function can be deduced from \cite[Proposition 8.3]{HN1}; we are grateful to Johannes Nicaise for pointing this out.
In the notation used in \textit{loc.\ cit.}, that result reads as follows:
$$
Z_{\mathcal A_K}(T)=\sum_{d\in \N} \phi_{\mathcal A_K}(d)\cdot (\mathbb L-1)^{t_{\mathcal A_K}(d)}\cdot \mathbb L^{u_{\mathcal A_K}(d)+ord_{\mathcal A_K}(d)}\cdot [B_{\mathcal A_K}(d)]\cdot T^d.
$$  
Since $\mathcal A_K$ has semi-abelian reduction, the unipotent rank $ u_{\mathcal A_K}(d)$ vanishes for all $d$, cf.\ \cite[Section 2]{HN2}.
For the same reason, \cite[Proposition 6.2]{HN1} implies that the toric rank $t_{\mathcal A_K}(d)$ and the abelian quotient $B_{\mathcal A_K}(d)$ do not depend on $d$. 
Vanishing of the
order function $ord_{\mathcal A_K}(d)$ can be deduced e.g.\ from \cite[Proposition 7.5 and Corollary 4.20]{HN1},
and for the expression $\phi_{\mathcal A_K}(d) = \phi_{\mathcal A_K}(1)d^w$, see the proof of Theorem 8.6 in \cite{HN1} and
references therein. 
Finally, $\phi_{\mathcal A_K}(1)$ is the number of components of $\AA^{\Ne}_0$ by \cite[Definition 3.6]{HN1}.
Altogether, this establishes the formula claimed in Corollary \ref{cor:Neron}.
\end{proof}

\begin{proof}[Proof of Corollary \ref{cor:supp}]
Corollary \ref{cor:supp} is an immediate consequence of the relation between the N\'eron model $\mathcal A^{\Ne}$ and the semi-stable model $\mathcal A$, given by Jordan and Morrison in \cite[Theorem 1.4]{JM}.
\end{proof}

\section*{Acknowledgement}
We are grateful to the referee for useful remarks and to Daniel Huybrechts, Radu Laza, Johannes Nicaise and Arne Smeets for discussions.
Special thanks to Johannes Nicaise for pointing out the relevance of \cite{HN1} and \cite{HN2}.


\begin{thebibliography}{HKLR}


\bibitem[SGA]{SGA}
{\it Groupes de monodromie en g\'eom\'etrie alg\'ebrique. I.} S\'eminaire de G\'eom\'etrie Alg\'ebrique du Bois-Marie 1967--1969 (SGA 7 I), Dirig\'e par A. Grothendieck. Avec la collaboration de M.\
Raynaud et D.S.\ Rim. Lecture Notes in Mathematics, Vol.\ \textbf{288}. Springer-Verlag, Berlin, 1972.


\bibitem[ABW]{ABW}
D.\ Arapura, P.\ Bakhtary and J.\ W\l{}odarczyk, {\it Weights on cohomology, invariants of singularities, and dual complexes}, Math.\ Ann.\ \textbf{357} (2013), 513--550.

\bibitem[BB]{BB}
W.L.\ Baily and A.\ Borel, {\it  Compactification of arithmetic quotients of bounded symmetric domains}, Ann.\  of Math.\ \textbf{84} (1966), 442--528.


\bibitem[Bor]{Bor}
A.\ Borel, {\it Some metric properties of arithmetic quotients of symmetric
spaces and an extension theorem}, J. Differential Geometry, Vol. 6, 1972, pp. 543--560.

\bibitem[BLR]{BLR}
S.\ Bosch, W.\ L\"utkebohmert and M.\ Raynaud, {\it N\'eron models}, Ergebnisse der Mathematik und ihrer Grenzgebiete (3), \textbf{21}, Berlin, New York: Springer-Verlag, 1990.

\bibitem[CKS]{CKS}
E.\ Cattani, A.\ Kaplan and W.\ Schmid, {\it Degeneration of Hodge structures},
Ann. of Math. (2), Vol. 123, No. 3, 1986, pp. 457--535.

\bibitem[Cl]{Cl}
H.\ Clemens, {\em The N\'eron model for families of intermediate Jacobians acquiring ``algebraic'' singularities},
Inst. Hautes \'Etudes Sci. Publ. Math., \textbf{58} (1983), 5--18.

\bibitem[Del]{Del}
P.\ Deligne,
{\it La conjecture de Weil pour les surfaces K3},
Inventiones math. 15, 206--226 (1972)


\bibitem[HN1]{HN1}
L.H.\ Halle and J.\ Nicaise, {\em Motivic zeta functions of abelian varieties, and the monodromy conjecture}, Adv.\ Math.\ \textbf{227} 2011, 610--653.

\bibitem[HN2]{HN2}
L.H.\ Halle and J.\ Nicaise, {\em Jumps and monodromy of abelian varieties}, Doc.\ Math.\ 2011, 937--968.

\bibitem[Huy]{Huy}
D.\ Huybrechts, {\it Lectures on K3 surfaces}, Cambridge University Press, 2015.

\bibitem[JM]{JM}
B.W.\ Jordan and D.R.\ Morrison {\it On the N\'eron models of abelian surfaces with quaternionic multiplication}, J.\ reine angew.\ Math.\ \textbf{447} (1994), 1--22.

\bibitem[KKMSD]{KKMSD}
G.\ Kempf, F.\ F.\ Knudsen, D.\ Mumford and B.\ Saint-Donat, {\it Toroidal embeddings I}, Lecture Notes in Mathematics \textbf{339}, Springer-Verlag, Berlin, 1973.

\bibitem[KLSV]{KLSV}
J.\ Koll\'ar, R.\ Laza, G.\ Sacc\`a and C.\ Voisin, {\it Remarks on degenerations of hyper-K\"ahler manifolds}, arXiv:1704.02731.

\bibitem[KS]{KS}
M.\ Kuga and I.\ Satake, {\it Abelian varieties attached to polarized K3 surfaces}, Math.\ Ann.\ \textbf{169} (1967), 239--242.

\bibitem[KSV]{KSV}
N.\ Kurnosov, A.\ Soldatenkov and M.\ Verbitsky,
{\it Kuga-Satake construction and cohomology of hyperk\"ahler manifolds}, arXiv:1703.07477.

\bibitem[Ku]{Ku}
V.S.\ Kulikov, {\it Degenerations of K3 surfaces and Enriques surfaces}, Math.\ USSR Izvestija \textbf{11} (1977), no. 5, 957--989.

\bibitem[Mo]{Mo}
D.R.\ Morrison, {\it The Clemens--Schmid exact sequence and applications}, in: Topics in Transcendental Algebraic Geometry, Annals of Math.\ Studies \textbf{106}, Princeton University Press 1984, 101--119.

\bibitem[Sch]{Sch}
W.\ Schmid,
{\it Variation of Hodge structure: the singularities of the period mapping},
Inventiones math. 22, 211-319 (1973)

\bibitem[Sa]{Sa}
M.\ Saito, {\em Admissible normal functions},
J. Algebraic Geom., \textbf{5} (1996), 235--276.

\bibitem[Si]{Si}
C.T.\ Simpson, {\em Moduli of representations of the fundamental group of a smooth projective variety I}, Publ.\ Math.\ de l'IH\'ES \textbf{79} (1994), 47--129. 

\bibitem[SV]{SV}
A.\ Stewart and V.\ Vologodsky, {\em Motivic integral of K3 surfaces over a non-archimedean field},
Adv.\ Math.\ \textbf{228} (2011), 2688--2730. 

\bibitem[V]{V}
C.\ Voisin, 
{\it A generalization of the Kuga--Satake construction}, Pure Appl.\ Math.\ Q.\ \textbf{1} 
(2005), 415--439.

\bibitem[Zu]{Zu}
S.\ Zucker, {\em Generalized intermediate Jacobians and the theorem on normal functions},
Invent. Math., \textbf{33}, No. 3 (1976), 185--222.

\end{thebibliography}
\end{document}